\documentclass[11pt]{article}
\usepackage{amsmath}
\usepackage{amssymb}
\usepackage{amsthm}
\usepackage[usenames]{color}
\usepackage{amscd}
\usepackage{dsfont}
\usepackage[colorlinks=true,linkcolor=webgreen,filecolor=webbrown,
citecolor=webgreen]{hyperref}

\definecolor{webgreen}{rgb}{0,.5,0}
\definecolor{webbrown}{rgb}{.6,0,0}

\hoffset=-.7truein \voffset=-.6truein \textwidth=160mm
\textheight=230mm

\def\C{{\mathds{C}}}

\def\N{{\mathds{N}}}
\def\Z{{\mathds{Z}}}

\def\1{{\bf 1}}
\def\a{{\bf a}}

\def\id{\operatorname{id}}
\def\lcm{\operatorname{lcm}}

\newtheorem{theorem}{Theorem}
\newtheorem{corollary}{Corollary}

\newtheorem{lemma}{Lemma}

\begin{document}

\title{Menon's identity and arithmetical sums representing functions of several
variables}
\author{L\'aszl\'o T\'oth \thanks{The author gratefully acknowledges support from the
Austrian Science Fund (FWF) under the project Nr. P20847-N18.}\\ \\
Department of Mathematics, University of P\'ecs \\ Ifj\'us\'ag u. 6,
H-7624 P\'ecs, Hungary \\ and \\
Institute of Mathematics, Department of Integrative Biology \\
Universit\"at f\"ur Bodenkultur, Gregor Mendel-Stra{\ss}e 33, A-1180
Wien, Austria \\ \\ E-mail: \texttt{ltoth@gamma.ttk.pte.hu}}
\date{}
\maketitle

\begin{abstract} We generalize Menon's identity by considering sums representing arithmetical
functions of several variables. As an application, we give a formula
for the number of cyclic subgroups of the direct product of several
cyclic groups of arbitrary orders. We also point out extensions of
Menon's identity in the one variable case, which seems to not appear
in the literature.
\end{abstract}

{\it 2010 Mathematics Subject Classification}: 11A25, 11A07, 20D60,
20K27

{\it Key Words and Phrases}: Menon's identity, arithmetical function
of several variables, multiplicative function, Euler's function,
simultaneous congruences, Cauchy-Frobenius lemma, cyclic group,
cyclic subgroup

\section{Introduction}

Menon's identity \cite{Kes1965} states, that for every $n\in \N:=
\{1,2,\ldots\}$,
\begin{equation} \label{Menon_id} \sum_{\substack{k=1\\
\gcd(k,n)=1}}^n \gcd(k-1,n)= \phi(n) \tau(n),
\end{equation}
where $\phi$ denotes Euler's function and $\tau(n)$ is the number of
divisors of $n$.

This identity has many generalizations derived by several authors.
For example, if $f$ is an arbitrary arithmetical function, then
\begin{equation} \label{SitaRamaiah_id}
\sum_{\substack{k=1\\ \gcd(k,n)=1}}^n f(\gcd(k-1,n))= \phi(n)
\sum_{d\mid n} \frac{(\mu*f)(d)}{\phi(d)} \quad (n\in \N),
\end{equation}
where $*$ stands for the Dirichlet convolution. Formula
\eqref{SitaRamaiah_id} was deduced, in an equivalent form, by Kesava
Menon \cite[Th.\ 1]{Kes1965} for $f$ multiplicative, and by Sita
Ramaiah \cite[Th.\ 9.1]{Sit1978} in a more general form.

Nageswara Rao \cite{Nag1972} proved that
\begin{equation} \label{Nageswara_id}
\sum_{\substack{k_1,\ldots,k_s=1\\ \gcd(k_1,\ldots, k_s,n)=1}}^n
\gcd(k_1-a_1,\ldots,k_s-a_s,n)^s = \phi_s(n) \tau(n) \quad (n\in
\N),
\end{equation}
where $a_1,\ldots,a_s\in \Z$, $\gcd(a_1,\ldots,a_s,n)=1$ and
$\phi_s(n)=n^s\prod_{p\mid n} (1-1/p^s)$ is the Jordan function of
order $s$.

Richards \cite{Ric1984} remarked that for any polynomial $g$ with
integer coefficients,
\begin{equation} \label{Richards_id}
\sum_{\substack{k=1\\ \gcd(k,n)=1}}^n \gcd(g(k),n)= \phi(n)
\sum_{d\mid n} \eta_g(d) \quad (n \in \N),
\end{equation}
where $\eta_g(d)$ stands for the number of solutions $x$ (mod $d$)
of the congruence $g(x)\equiv 0$ (mod $d$) such that $\gcd(x,d)=1$.
Haukkanen and Wang \cite{HauWan1996} gave a proof of formula
\eqref{Richards_id} in a more general setting.

In a recent paper Sury \cite{Sur2009} showed that
\begin{equation} \label{Sury_id}
\sum_{\substack{k_1,k_2,\ldots,k_r=1\\ \gcd(k_1,n)=1}}^n
\gcd(k_1-1,k_2,\ldots,k_r,n)= \phi(n) \sigma_{r-1}(n) \quad (n\in
\N),
\end{equation}
where $\sigma_k(n)=\sum_{d\mid n} d^k$.

Further generalizations of \eqref{Menon_id} and combinations of the
existing ones were given by Haukkanen \cite{Hau2005,Hau2008},
Haukkanen and McCarthy \cite{HauMcC1991}, Haukkanen and
Sivaramakrishnan \cite{HauSiv1994}, Sivaramakrishnan
\cite{Siv1974,Siv1979} and others. See also McCarthy \cite[Ch.\
1,2]{McC1986}. All of these identities represent functions of a
single variable.

Note that there are three main methods used in the literature to
prove Menon-type identities, namely: (i) group-theoretic method,
based on the Cauchy-Frobenius lemma, called also Burnside's lemma,
concerning group actions, see \cite{Kes1965,Ric1984,Sur2009}; (ii)
elementary number-theoretic methods based on properties of the
Dirichlet convolution and of multiplicative functions, see
\cite{Kes1965,Hau2005,HauWan1996, Sit1978}; (iii) number theoretic
method based on finite Fourier representations and Cauchy products
of $r$-even functions, cf. \cite{HauMcC1991,HauSiv1994,
McC1986,Nag1972}.

Recall the idea of the proof of \eqref{Menon_id} based on the
Cauchy-Frobenius lemma. Let $G$ be an arbitrary group of order $n$
and let $U_n:= \{k\in \N: 1\le k\le n, \gcd(k,n)=1\}$ be the group
of units (mod $n$). Consider the action of the group $U_n$ on $G$
given by $U_n \times G\ni (k,g) \mapsto g^k$. Here two elements of
$G$ belong to the same orbit iff they generate the same cyclic
subgroup. Hence the number of orbits is equal to the number of
cyclic subgroups of $G$, notation $c(G)$. We obtain, according to
the Cauchy-Frobenius lemma, that
\begin{equation} \label{Burnside_id}
c(G)=\frac1{\phi(n)} \sum_{\substack{k=1\\ \gcd(k,n)=1}}^n
\psi(G,k),
\end{equation}
where $\psi(G,k):= \# \{g\in G: g^k=g\}$ is the number of fixed
elements of $G$.

If $G=C_n$ is the cyclic group of order $n$, then $c(G)=\tau(n)$,
$\psi(G,k)=\gcd(k-1,n)$ and \eqref{Burnside_id} gives Menon's
identity \eqref{Menon_id}.

Now specialize \eqref{Burnside_id} to the case where $G$ is the
direct product of several cyclic groups of arbitrary orders, i.e.,
$G=C_{m_1} \times \cdots \times C_{m_r}$, where $m_1,\ldots, m_r\in
\N$ ($r\in \N$). We deduce that the number of its cyclic subgroups
is
\begin{equation} \label{form_cyclic_subgroups}
c(C_{m_1} \times \cdots \times C_{m_r}) = \frac1{\phi(q)} \sum_{\substack{k=1\\
\gcd(k, q)=1}}^q \gcd(k-1,m_1)\cdots \gcd(k-1,m_r),
\end{equation}
where $q=m_1\cdots m_r$.

Being motivated by this example and in order to evaluate the right
hand side of \eqref{form_cyclic_subgroups}, see Section
\ref{number_cyclic_subgroups}, we generalize in this paper Menon's
identity \eqref{Menon_id}, and also \eqref{SitaRamaiah_id} and
\eqref{Richards_id}, by considering arithmetical sums representing
functions of several variables. For example, using simple
number-theoretical arguments we derive the following identity:

Let $m_1,\ldots, m_r,M\in \N$ ($r\in \N$),
$m:=\lcm[m_1,\ldots,m_r]$, $m\mid M$ and $\a:=(a_1,\ldots,a_r)\in
\Z^r$. Then
\begin{equation*}
\frac1{\phi(M)} \sum_{\substack{k=1\\ \gcd(k,M)=1}}^M
\gcd(k-a_1,m_1)\cdots \gcd(k-a_r,m_r)
\end{equation*} \begin{equation} \label{my_general_Menon_id}
=  \sum_{d_1\mid m_1,\ldots,d_r\mid m_r} \frac{\phi(d_1) \cdots
\phi(d_r)}{\phi(\lcm[d_1,\ldots,d_r])} \eta^{(\a)}(d_1,\ldots, d_r),
\end{equation}
where
\begin{equation} \label{eta}
\eta^{(\a)}(d_1,\ldots,d_r)=
\begin{cases}
1, & \text{if} \ \gcd(d_i,a_i)=1 \ (1\le i\le r) \, \text{and}\,
\gcd(d_i,d_j)\mid a_i-a_j \ (1\le i,j\le r),\\ 0, &
\text{otherwise}.
\end{cases}
\end{equation}

Remark that \eqref{my_general_Menon_id} does not depend on $M$ and
it represents a multiplicative function of $r$ variables, to be
defined in Section \ref{section_prelim}. Also, each term of the sum
in the right hand side of \eqref{my_general_Menon_id} is an integer,
therefore the sum in the left hand side of
\eqref{my_general_Menon_id} is a multiple of $\phi(M)$ for any
$m_1,\ldots, m_r\in \N$.

If $r=2$ and $a_1=a_2=a\in \Z$ with $\gcd(a,m)=1$, then
\eqref{my_general_Menon_id} gives
\begin{equation} \label{my_general_Menon_id_r_2}
\sum_{\substack{k=1\\ \gcd(k,M)=1}}^M \gcd(k-a,m_1)\gcd(k-a,m_2) =
\phi(M) \sum_{d_1\mid m_1,d_2\mid m_2} \phi(\gcd(d_1,d_2)).
\end{equation}

If $m_1,\ldots, m_r$ are pairwise relatively prime, $M=m=m_1\cdots
m_r$ and $\a:=(a_1,\ldots,a_r)\in \Z^r$, then
\eqref{my_general_Menon_id} reduces to
\begin{equation} \label{rel_prime}
\sum_{\substack{k=1\\ \gcd(k,m)=1}}^m \gcd(k-a_1,m_1)\cdots
\gcd(k-a_r,m_r)= \phi(m) \tau(m_1,a_1)\cdots \tau(m_r,a_r),
\end{equation}
where $\tau(n,a)$ denotes the number of divisors $d$ of $n$ such
that $\gcd(d,a)=1$. Now, if $a_1=\ldots=a_r=a\in \Z$, then the right
hand side of \eqref{rel_prime} is $\phi(m) \tau(m,a)$.

Note that the arithmetical function of several variables
\begin{equation} \label{Pillai_multi}
A(m_1,\ldots,m_r):= \frac1{m} \sum_{k=1}^m \gcd(k,m_1)\cdots
\gcd(k,m_r),
\end{equation}
where $m_1,\ldots, m_r\in \N$ and $m:=\lcm[m_1,\ldots,m_r]$, as
above, was considered by Deitmar, Koyama and Kurokawa
\cite{DeiKoyKur2008} in case $m_j\mid m_{j+1}$ ($1\le j\le r-1$) by
studying analytic properties of some zeta functions of Igusa type.
The function \eqref{Pillai_multi} was investigated in
\cite{Tot2011}.

For $r=1$ and $m_1=m$ \eqref{Pillai_multi} reduces to the function
\begin{equation} \label{Pillai_A}
A(m):= \frac1{m} \sum_{k=1}^m \gcd(k,m) = \sum_{d\mid m}
\frac{\phi(d)}{d},
\end{equation}
of which arithmetical and analytical properties were surveyed in
\cite{Tot2010}.

We also generalize the function \eqref{Pillai_multi} and deduce
certain single variable extensions of Menon's identity, which seems
to not appear in the literature.

\section{Preliminaries} \label{section_prelim}

We present in this section some basic notions and properties to be
used in the paper.

We recall that an arithmetical function of $r$ variables is a
function $f:\N^r \to \C$, notation $f\in {\cal F}_r$.  If $f,g\in
{\cal F}_r$, then their convolution is defined as
\begin{equation} \label{convo_functions}
(f*g)(m_1,\ldots,m_r)= \sum_{d_1\mid m_1, \ldots, d_r\mid m_r}
f(d_1,\ldots,d_r) g(m_1/d_1, \ldots, m_r/d_r).
\end{equation}

A function $f\in {\cal F}_r$ is said to be multiplicative if it is
nonzero and
\begin{equation*} \label{def_mult}
f(m_1n_1,\ldots,m_rn_r)= f(m_1,\ldots,m_r) f(n_1,\ldots,n_r)
\end{equation*}
holds for any $m_1,\ldots,m_r,n_1,\ldots,n_r\in \N$ such that
$\gcd(m_1\cdots m_r,n_1\cdots n_r)=1$.

If $f$ is multiplicative, then it is determined by the values
$f(p^{a_1},\ldots,p^{a_r})$, where $p$ is prime and
$a_1,\ldots,a_r\in \N_0:=\{0,1,2,\ldots\}$. More exactly,
$f(1,\ldots,1)=1$ and for any $m_1,\ldots,m_r\in \N$,
\begin{equation*}
f(m_1,\ldots,m_r)= \prod_p f(p^{e_p(m_1)}, \ldots,p^{e_p(m_r)}),
\end{equation*}
where $m_i=\prod_p p^{e_p(m_i)}$ are the prime power factorizations
of $m_i$ ($1\le i\le r$), the products being over the primes $p$ and
all but a finite number of the exponents $e_p(m_i)$ being zero.

If $r=1$, i.e., in case of functions of a single variable we
reobtain the familiar notion of multiplicativity.

For example, the functions $(m_1,\ldots,m_r) \mapsto
\gcd(m_1,\ldots,m_r)$ and $(m_1,\ldots,m_r) \mapsto \linebreak
\lcm[m_1,\ldots,m_r]$ are multiplicative for any $r\in \N$.

The convolution \eqref{convo_functions} preserves the
multiplicativity of functions. This property, well-known in the one
variable case, follows easily from the definitions.

The product and the quotient of (nonvanishing) multiplicative
functions are multiplicative. Let $h\in {\cal F}_1$ and $f\in {\cal
F}_r$ be multiplicative functions. Then the functions
$(m_1,\ldots,m_r) \mapsto h(m_1)\cdots h(m_r)$ and $(m_1,\ldots,m_r)
\mapsto h(f(m_1,\ldots,m_r))$ are multiplicative. In particular, \\
$(m_1,\ldots,m_r) \mapsto h(\gcd(m_1,\ldots,m_r))$ and
$(m_1,\ldots,m_r) \mapsto h(\lcm[m_1,\ldots,m_r])$ are
multiplicative.

The definition and properties of multiplicativity for functions of
several variables go back to the work of Vaidyanathaswamy
\cite{Vai1931}.

In the one variable case $\1$, $\id$, $\id_t$ and $\phi_t$ ($t\in
\C$) will denote the functions given by $\1(n)=1$, $\id(n)=n$,
$\id_t(n)=n^t$ and $\phi_t(n)=n^t\prod_{p\mid n} \left(1-1/p^t
\right)$ ($n\in \N$), respectively.

Let $G=(g_1,\ldots,g_r)$ be a system of polynomials with integer
coefficients and consider the simultaneous congruences
\begin{equation} \label{sim_cong_m}
g_1(x)\equiv 0 \text{ (mod $m_1$)}, \ldots, g_r(x)\equiv 0 \text{
(mod $m_r$)}.
\end{equation}

Let $N_G(m_1,\ldots,m_r)$ denote the number of solutions $x$ (mod
$\lcm[m_1,\ldots,m_r]$) of \eqref{sim_cong_m}. Furthermore, let
$\eta_G(m_1,\ldots,m_r)$ denote the number of solutions $x$ (mod
$\lcm[m_1,\ldots,m_r]$) of  \eqref{sim_cong_m} such that
$\gcd(x,m_1)=1$, ..., $\gcd(x,m_r)=1$. These are other examples of
multiplicative functions of several variables, properties which
might be known, but we could not locate them in the literature. We
give their proof in Lemma \ref{lemma_sim_cong}.

If $r=1$, $m_1=m$ and $g_1=g$, then $N_G(m):=N_g(m)$ is the number
of solutions $x$ (mod $m$) of the congruence $g(x)\equiv 0$ (mod
$m$), which is multiplicative as a function of a single variable.
This is well-known, see e.g., \cite[Th.\ 5.28]{Apo1976}.

\begin{lemma} \label{lemma_sim_cong} For every system $G=(g_1,\ldots,g_r)$
of polynomials with integer coefficients the functions
$(m_1,\ldots,m_r) \mapsto N_G(m_1,\ldots,m_r)$ and $(m_1,\ldots,m_r)
\mapsto \eta_G(m_1,\ldots,m_r)$ are multiplicative.
\end{lemma}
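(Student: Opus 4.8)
The plan is to reduce everything to the Chinese Remainder Theorem, exploiting that both $N_G$ and $\eta_G$ count residue classes modulo an $\lcm$ which factors multiplicatively. Fix $m_1,\ldots,m_r,n_1,\ldots,n_r\in \N$ with $\gcd(m_1\cdots m_r,\, n_1\cdots n_r)=1$, and put $L:=\lcm[m_1,\ldots,m_r]$ and $L':=\lcm[n_1,\ldots,n_r]$. First I would record the elementary facts that $\gcd(L,L')=1$, that $\lcm[m_1n_1,\ldots,m_rn_r]=LL'$, and that for each $i$ one has $\gcd(m_i,n_i)=1$, so that $g_i(x)\equiv 0 \pmod{m_in_i}$ holds iff $g_i(x)\equiv 0 \pmod{m_i}$ and $g_i(x)\equiv 0 \pmod{n_i}$.

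Next I would use the ring isomorphism $\Z/LL'\Z \cong \Z/L\Z \times \Z/L'\Z$ given by $x \bmod LL' \mapsto (x\bmod L,\, x\bmod L')$. The point is that since $m_i\mid L$, the truth value of $g_i(x)\equiv 0\pmod{m_i}$ and the value of $\gcd(x,m_i)$ depend only on $x\bmod L$; similarly $g_i(x)\bmod n_i$ and $\gcd(x,n_i)$ depend only on $x\bmod L'$. Hence a class $x\bmod LL'$ is a solution of the system $g_i(x)\equiv 0 \pmod{m_in_i}$ ($1\le i\le r$) iff its first component is a solution of $g_i(x)\equiv 0\pmod{m_i}$ ($1\le i\le r$) and its second component is a solution of $g_i(x)\equiv 0 \pmod{n_i}$ ($1\le i\le r$). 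This yields a bijection between the solution set counted by $N_G(m_1n_1,\ldots,m_rn_r)$ and the Cartesian product of those counted by $N_G(m_1,\ldots,m_r)$ and $N_G(n_1,\ldots,n_r)$, so $N_G(m_1n_1,\ldots,m_rn_r)=N_G(m_1,\ldots,m_r)\,N_G(n_1,\ldots,n_r)$. As $N_G(1,\ldots,1)=1$, the function $N_G$ is multiplicative.

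For $\eta_G$ I would simply impose the extra coprimality constraints on the same bijection: requiring $\gcd(x,m_in_i)=1$ for all $i$ is, by $\gcd(m_i,n_i)=1$, equivalent to requiring $\gcd(x,m_i)=1$ and $\gcd(x,n_i)=1$ for all $i$; by the observation above the first batch depends only on $x\bmod L$ and the second only on $x\bmod L'$. Thus the CRT bijection restricts to a bijection between the set counted by $\eta_G(m_1n_1,\ldots,m_rn_r)$ and the product of those counted by $\eta_G(m_1,\ldots,m_r)$ and $\eta_G(n_1,\ldots,n_r)$, and since $\eta_G(1,\ldots,1)=1$ this gives multiplicativity of $\eta_G$.

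The one step needing care — and the main, if minor, obstacle — is precisely the assertion that the truth value of $g_i(x)\equiv 0\pmod{m_i}$ and of $\gcd(x,m_i)=1$ is constant on each residue class modulo $L$; this is exactly where $m_i\mid L$ is used and is what makes the CRT splitting compatible with the conditions defining $N_G$ and $\eta_G$. The remaining points (the factorization $\lcm[m_1n_1,\ldots,m_rn_r]=LL'$ and $\gcd(L,L')=1$) are routine prime-by-prime bookkeeping.
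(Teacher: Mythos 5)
Your proposal is correct and follows essentially the same route as the paper: both arguments use the Chinese Remainder Theorem to set up a bijection between solutions modulo $\lcm[m_1n_1,\ldots,m_rn_r]=LL'$ and pairs of solutions modulo $L$ and $L'$, noting that the congruence and coprimality conditions split along the two factors. Your version just packages the paper's two-directional correspondence as the ring isomorphism $\Z/LL'\Z\cong \Z/L\Z\times\Z/L'\Z$, which is a cosmetic difference.
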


\begin{proof} We prove the multiplicativity of the function $N_G$. In case of $\eta_G$
the proof is similar.

Let $m_1,\ldots,m_r,n_1,\ldots,n_r\in \N$ such that $\gcd(m_1\cdots
m_r,n_1\cdots n_r)=1$. Consider the simultaneous congruences
\eqref{sim_cong_m} together with
\begin{equation} \label{sim_cong_n}
g_1(x)\equiv 0 \text{ (mod $n_1$)}, \ldots, g_r(x)\equiv 0 \text{
(mod $n_r$)},
\end{equation}
\begin{equation} \label{sim_cong_mn}
g_1(x)\equiv 0 \text{ (mod $m_1n_1$)}, \ldots, g_r(x)\equiv 0 \text{
(mod $m_rn_r$)}.
\end{equation}

If $x$ is any solution of \eqref{sim_cong_mn}, then $x$ is a
solution of both \eqref{sim_cong_m} and \eqref{sim_cong_n}.

Conversely, assume that $x^*$ is a solution of \eqref{sim_cong_m}
and $x^{**}$ is a solution of  \eqref{sim_cong_n}. Consider the
simultaneous congruences
\begin{equation} \label{sim_cong_Chinese}
x\equiv x^* \text{ (mod $\lcm[m_1,\ldots,m_r]$)}, \ x\equiv x^{**}
\text{ (mod $\lcm[n_1,\ldots,n_r]$)}.
\end{equation}

Let $m:=\lcm[m_1,\ldots,m_r]$, $n:=\lcm[n_1,\ldots,n_r]$. By the
Chinese remainder theorem \eqref{sim_cong_Chinese} has a unique
solution $\tilde{x}$ (mod $mn$), where $mn=
\lcm[m_1n_1,\ldots,m_rn_r]$. Here $\tilde{x}$ is a solution of
\eqref{sim_cong_mn}, completing the proof.
\end{proof}

The following lemma is a known property, it follows easily by the
inclusion-exclusion principle, cf. \cite[Th.\ 5.32]{Apo1976}.

\begin{lemma} \label{lemma_phi} Let $n,d,x\in \N$ such that $d\mid n$, $1\le x\le d$,
$\gcd(x,d)=1$. Then
\begin{equation*}
\# \{k\in \N: 1\le k \le n, k\equiv x \ \text{\rm (mod $d$)},
\gcd(k,n)=1 \}=\phi(n)/\phi(d).
\end{equation*}
\end{lemma}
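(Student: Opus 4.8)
The plan is to prove Lemma~\ref{lemma_phi} by the inclusion--exclusion principle, exactly as indicated. First I would reduce the count to divisors of $n$ supported on the primes dividing $n/d$. Write $n = \prod_p p^{e_p}$ and $d = \prod_p p^{f_p}$ with $0 \le f_p \le e_p$; since $\gcd(x,d)=1$, a number $k$ with $k \equiv x \pmod d$ automatically satisfies $\gcd(k,p)=1$ for every prime $p \mid d$. Hence the extra condition $\gcd(k,n)=1$ only imposes $p \nmid k$ for the primes $p$ dividing $n/d$, i.e.\ those $p \mid n$ with $f_p < e_p$; call the set of these primes $S$.

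Next I would apply inclusion--exclusion over $S$. Counting $k \in \{1,\dots,n\}$ with $k \equiv x \pmod d$ and, for each $T \subseteq S$, $\prod_{p\in T} p \mid k$: the two congruences $k \equiv x \pmod d$ and $k \equiv 0 \pmod{\prod_{p \in T} p}$ are compatible (for $p \in T \subseteq S$ we have $p \nmid d$, so the moduli $d$ and $\prod_{p\in T}p$ are coprime) and by the Chinese remainder theorem determine $k$ modulo $d\prod_{p\in T}p$. Over the range $1 \le k \le n$ this yields exactly $n/(d \prod_{p\in T} p)$ solutions. Therefore the desired count is
\begin{equation*}
\sum_{T \subseteq S} (-1)^{|T|} \frac{n}{d \prod_{p \in T} p}
= \frac{n}{d} \prod_{p \in S} \left(1 - \frac1p\right).
\end{equation*}

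Finally I would identify this with $\phi(n)/\phi(d)$. Using $\phi(n) = n \prod_{p \mid n}(1 - 1/p)$ and $\phi(d) = d \prod_{p \mid d}(1 - 1/p)$, and noting that the primes dividing $n$ split as those dividing $d$ together with $S$ (disjointly, since $p \in S$ means $f_p < e_p$ but we could have $f_p = 0$; more precisely $S$ consists of primes $p\mid n$ with $p\nmid d$ \emph{or} $p\mid d$ with higher exponent in $n$, but in either case the factor $1-1/p$ appears in $\phi(n)/n$ and, if $p \mid d$, cancels against the corresponding factor in $\phi(d)/d$), we get $\phi(n)/\phi(d) = (n/d)\prod_{p\in S}(1-1/p)$, matching the count above. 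Since all the arithmetic here is elementary, there is no real obstacle; the only point requiring a little care is the bookkeeping of which primes survive after the coprimality-to-$d$ reduction, i.e.\ correctly describing the set $S$ so that the product telescopes to $\phi(n)/\phi(d)$.
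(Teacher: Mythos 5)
Your overall strategy --- reduce to the primes where coprimality to $n$ is not already forced, then apply inclusion--exclusion and the Chinese remainder theorem --- is the standard argument, and it is essentially all the paper offers: the lemma is stated without proof, with a remark that it ``follows easily by the inclusion-exclusion principle'' and a citation of Apostol, Th.\ 5.32. However, there is a concrete error in your identification of the set $S$. You define $S$ as the set of primes dividing $n/d$, i.e.\ those $p\mid n$ with $f_p<e_p$. That is the wrong set: the condition $k\equiv x \pmod d$ with $\gcd(x,d)=1$ already forces $p\nmid k$ for \emph{every} prime $p\mid d$, whether or not $f_p<e_p$; the primes for which $p\nmid k$ must be imposed separately are exactly those with $p\mid n$ and $p\nmid d$ (i.e.\ $f_p=0<e_p$). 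With your $S$, the key assertion ``for $p\in T\subseteq S$ we have $p\nmid d$'' fails whenever some prime divides both $d$ and $n/d$ (e.g.\ $n=p^2$, $d=p$); for such $T$ the congruences $k\equiv x\pmod d$ and $k\equiv 0\pmod p$ are incompatible, so the true count is $0$ rather than $n/(d\prod_{p\in T}p)$; and the final identity $(n/d)\prod_{p\in S}(1-1/p)=\phi(n)/\phi(d)$ also fails: for $n=p^2$, $d=p$ your formula gives $p-1$ while $\phi(n)/\phi(d)=p$. Your closing parenthetical even asserts both that the primes of $n$ split disjointly into those of $d$ and those of $S$, and that $S$ may contain primes dividing $d$ --- which is self-contradictory.

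The fix is one line: take $S=\{p \ \text{prime}: p\mid n,\ p\nmid d\}$. Then for every $T\subseteq S$ the modulus $d\prod_{p\in T}p$ divides $n$ and is built from pairwise coprime pieces, the inclusion--exclusion gives $(n/d)\prod_{p\in S}(1-1/p)$, and since $d\mid n$ implies every prime of $d$ divides $n$, one has $\phi(n)/\phi(d)=(n/d)\prod_{p\mid n,\, p\nmid d}(1-1/p)$, as required. With that correction your argument is complete and supplies precisely the proof the paper leaves to the reader.
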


\section{Main results}

For $m_1,\ldots,m_r\in \N$ ($r\in \N$) let $m:
=\lcm[m_1,\ldots,m_r]$ and let $M\in \N$, $m\mid M$. Let
$F=(f_1,\ldots,f_r)$ be a system of arithmetical functions of one
variable and $G=(g_1,\ldots,g_r)$ be a system of polynomials with
integer coefficients.

Consider the arithmetical functions of $r$ variables
\begin{equation} \label{def_func_S}
S_F^{(G)}(m_1,\ldots,m_r):= \frac1{M} \sum_{k=1}^M
f_1(\gcd(g_1(k),m_1))\cdots f_r(\gcd(g_r(k),m_r)),
\end{equation}
\begin{equation} \label{def_func_R}
R_F^{(G)}(m_1,\ldots,m_r):= \frac1{\phi(M)}
\sum_{\substack{k=1\\\gcd(k,M)=1}}^M f_1(\gcd(g_1(k),m_1))\cdots
f_r(\gcd(g_r(k),m_r)).
\end{equation}

\begin{theorem} \label{theorem_S} If $F$ and $G$ are arbitrary systems of arithmetical
functions and polynomials with integer coefficients, respectively,
then for any $m_1,\ldots,m_r\in \N$,
\begin{equation} \label{repr_S}
S_F^{(G)}(m_1,\ldots,m_r) =  \sum_{d_1\mid m_1,\ldots,d_r\mid m_r}
\frac{(\mu*f_1)(d_1) \cdots (\mu*f_r)(d_r)} {\lcm[d_1,\ldots,d_r]}
N_G(d_1,\ldots,d_r),
\end{equation}
which does not depend on $M$.
\end{theorem}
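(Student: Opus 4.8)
The plan is to expand each factor $f_i\bigl(\gcd(g_i(k),m_i)\bigr)$ by Möbius inversion and then interchange the order of summation. Since $\1*\mu$ is the convolution identity, we have $f_i=\1*(\mu*f_i)$, that is, $f_i(n)=\sum_{d\mid n}(\mu*f_i)(d)$ for every $n\in\N$; hence
\[
f_i\bigl(\gcd(g_i(k),m_i)\bigr)=\sum_{d_i\mid\gcd(g_i(k),m_i)}(\mu*f_i)(d_i),
\]
and the condition $d_i\mid\gcd(g_i(k),m_i)$ is just the conjunction $d_i\mid m_i$ and $g_i(k)\equiv 0\pmod{d_i}$. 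Multiplying the $r$ resulting sums together and inserting into \eqref{def_func_S} gives
\[
M\,S_F^{(G)}(m_1,\ldots,m_r)=\sum_{k=1}^M\ \sum_{\substack{d_1\mid m_1,\ldots,d_r\mid m_r\\ g_i(k)\equiv 0\ (\mathrm{mod}\ d_i)\ (1\le i\le r)}}(\mu*f_1)(d_1)\cdots(\mu*f_r)(d_r).
\]

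Next I would interchange the two summations. The inner sum then ranges over all tuples $(d_1,\ldots,d_r)$ with $d_i\mid m_i$, each weighted by $(\mu*f_1)(d_1)\cdots(\mu*f_r)(d_r)$ and by the number of integers $k\in\{1,\ldots,M\}$ that satisfy the simultaneous congruences $g_i(k)\equiv 0\pmod{d_i}$ for $1\le i\le r$. The crux of the argument is to evaluate this count. Put $\ell:=\lcm[d_1,\ldots,d_r]$. Because $d_i\mid m_i$ for every $i$, we have $\ell\mid m=\lcm[m_1,\ldots,m_r]$, and since $m\mid M$ also $\ell\mid M$. The system in question depends only on $k$ modulo $\ell$ and, by the very definition of $N_G$ recalled in Section \ref{section_prelim}, has exactly $N_G(d_1,\ldots,d_r)$ solutions modulo $\ell$. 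As $k$ runs through $\{1,\ldots,M\}$, which is a union of $M/\ell$ complete residue systems modulo $\ell$, each class modulo $\ell$ occurs exactly $M/\ell$ times, so the number of admissible $k$ equals $\frac{M}{\lcm[d_1,\ldots,d_r]}\,N_G(d_1,\ldots,d_r)$.

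Substituting this back, the factor $M$ cancels and one obtains precisely \eqref{repr_S}. Since the right-hand side of \eqref{repr_S} no longer mentions $M$, it is automatically the same for every admissible $M$ (i.e.\ every multiple of $m$), which gives the last assertion of the theorem. The one place that needs genuine attention is the divisibility $\lcm[d_1,\ldots,d_r]\mid M$, which is exactly what makes the counting step an equality rather than an estimate; the rest is routine manipulation of Dirichlet convolutions, and I do not anticipate any real obstacle.
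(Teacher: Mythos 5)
Your proposal is correct and follows essentially the same route as the paper: expand each $f_i=\1*(\mu*f_i)$, interchange the summations, and evaluate the inner count as $(M/\lcm[d_1,\ldots,d_r])\,N_G(d_1,\ldots,d_r)$ using $\lcm[d_1,\ldots,d_r]\mid M$. The only difference is that you spell out the counting step, which the paper states without elaboration.
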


\begin{proof} Writing $f_i=\1 * (\mu * f_i)$ ($1\le i\le r$) we
obtain
\begin{equation*}
S_F^{(G)}(m_1,\ldots,m_r)= \frac1{M} \sum_{k=1}^M \sum_{d_1\mid
\gcd(g_1(k),m_1)} (\mu*f_1)(d_1) \cdots \sum_{d_r\mid
\gcd(g_r(k),m_r)} (\mu*f_r)(d_r)
\end{equation*}
\begin{equation*}
=  \frac1{M} \sum_{d_1\mid m_1,\ldots,d_r\mid m_r} (\mu*f_1)(d_1)
\cdots (\mu*f_r)(d_r) \sum_{\substack{1\le k\le M
\\ g_1(k)\equiv 0 \text{ (mod $d_1$)},\ldots, g_r(k)\equiv 0 \text{ (mod $d_r$)}}}
1,
\end{equation*}
where the inner sum is $(M/\lcm[d_1,\ldots,d_r])
N_G(d_1,\ldots,d_r)$.
\end{proof}

\begin{corollary} If $F$ is a system of multiplicative arithmetical functions and $G$ is any system of
polynomials with integer coefficients, then the function
$(m_1,\ldots,m_r) \mapsto S_F^{(G)}(m_1,\ldots,m_r)$ is
multiplicative.
\end{corollary}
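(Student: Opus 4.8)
The plan is to obtain multiplicativity for free from the representation \eqref{repr_S} proved in Theorem \ref{theorem_S}, by writing $S_F^{(G)}$ as a convolution of multiplicative functions of $r$ variables. Since the right-hand side of \eqref{repr_S} is independent of $M$, there is nothing to worry about on that score, and the whole argument is a matter of assembling the closure properties collected in Section \ref{section_prelim}.

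First I would check that each of the three factors occurring in \eqref{repr_S} is a multiplicative function of $(d_1,\ldots,d_r)$. Because $\mu$ and each $f_i$ are multiplicative in the one-variable sense, each $\mu * f_i$ is multiplicative in $\mathcal F_1$; hence each map $(d_1,\ldots,d_r)\mapsto (\mu*f_i)(d_i)$ is a multiplicative function in $\mathcal F_r$ (it depends on a single coordinate, on which a one-variable multiplicative function acts, and its value at $(1,\ldots,1)$ equals $(\mu*f_i)(1)=1$), so their product $(d_1,\ldots,d_r)\mapsto (\mu*f_1)(d_1)\cdots(\mu*f_r)(d_r)$ is multiplicative. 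Next, applying the preliminary remark to the one-variable multiplicative function $\id_{-1}$, the map $(d_1,\ldots,d_r)\mapsto \id_{-1}(\lcm[d_1,\ldots,d_r]) = 1/\lcm[d_1,\ldots,d_r]$ is multiplicative (equivalently, it is the quotient of the constant function $1$ by the multiplicative, nowhere-vanishing function $\lcm$). Finally $N_G$ is multiplicative by Lemma \ref{lemma_sim_cong}. Since products of (nonvanishing) multiplicative functions of $r$ variables are multiplicative, the function
\begin{equation*}
H(d_1,\ldots,d_r) := \frac{(\mu*f_1)(d_1)\cdots(\mu*f_r)(d_r)}{\lcm[d_1,\ldots,d_r]}\, N_G(d_1,\ldots,d_r)
\end{equation*}
is multiplicative.

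Then I would observe that \eqref{repr_S} says exactly $S_F^{(G)} = H * u$, where $u\in\mathcal F_r$ is the constant function $u(m_1,\ldots,m_r)=1$, which is trivially multiplicative; indeed by the definition \eqref{convo_functions} of the convolution, $(H*u)(m_1,\ldots,m_r)=\sum_{d_1\mid m_1,\ldots,d_r\mid m_r} H(d_1,\ldots,d_r)$. Since the convolution of $\mathcal F_r$ preserves multiplicativity, $S_F^{(G)}$ is multiplicative, which is the assertion.

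I do not anticipate any real obstacle: once the representation \eqref{repr_S} is in hand, the proof is a short chain of the form ``product / quotient / composition / convolution of multiplicative functions is multiplicative.'' The only points deserving a word of care are the passage from the stated special case (same $h$ in every slot) to distinct factors $\mu*f_i$, and the observation that $\lcm$ is nowhere zero so that $1/\lcm$ may be handled inside those closure statements; both are immediate.
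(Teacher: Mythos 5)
Your proof is correct and follows exactly the paper's own (one-line) argument: by Theorem \ref{theorem_S} and Lemma \ref{lemma_sim_cong}, $S_F^{(G)}$ is the convolution of a multiplicative function with the constant function $\1$, hence multiplicative. You have merely spelled out the closure properties from Section \ref{section_prelim} that the paper leaves implicit.
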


\begin{proof} By Theorem \ref{theorem_S} and Lemma \ref{lemma_sim_cong} the function $S_F^{(G)}$ is the
convolution of multiplicative functions, hence it is multiplicative.
\end{proof}

For the function $A(m_1,\ldots,m_r)$ given by \eqref{Pillai_multi}
we have the next representation.

\begin{corollary} {\rm (\cite[Prop.\ 12]{Tot2011}, $f_1=\ldots =f_r=\id$, $g_1(x)=\ldots =g_r(x)=x$)}
\begin{equation}
 \frac1{M} \sum_{k=1}^M \gcd(k,m_1)\cdots \gcd(k,m_r) =
 \sum_{d_1\mid m_1,\ldots,d_r\mid m_r} \frac{\phi(d_1) \cdots
\phi(d_r)} {\lcm[d_1,\ldots,d_r]},
\end{equation}
which is multiplicative.
\end{corollary}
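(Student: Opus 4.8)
The plan is to specialize Theorem~\ref{theorem_S} to the system $F=(\id,\ldots,\id)$ and to the system $G=(g_1,\ldots,g_r)$ with $g_i(x)=x$ for all $i$, and then to identify the two ingredients appearing on the right-hand side of \eqref{repr_S}.

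First I would recall the classical M\"obius-inversion identity $\phi=\mu*\id$, so that $(\mu*f_i)(d)=(\mu*\id)(d)=\phi(d)$ for each $i$ and each $d\in\N$. Next I would evaluate the factor $N_G(d_1,\ldots,d_r)$ for the chosen $G$: the simultaneous congruences \eqref{sim_cong_m} become $x\equiv 0 \pmod{d_i}$ for $1\le i\le r$, which is equivalent to the single congruence $x\equiv 0 \pmod{\lcm[d_1,\ldots,d_r]}$, and this has exactly one solution modulo $\lcm[d_1,\ldots,d_r]$. Hence $N_G(d_1,\ldots,d_r)=1$ for all $d_i$. Substituting these two evaluations into \eqref{repr_S} produces the stated identity, and the independence from $M$ is inherited directly from Theorem~\ref{theorem_S}.

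For the multiplicativity claim I would invoke the Corollary proved above asserting that $S_F^{(G)}$ is multiplicative whenever $F$ is a system of multiplicative functions: since $\id$ is multiplicative, $F=(\id,\ldots,\id)$ qualifies, so the function $A=S_F^{(G)}$ is multiplicative in $r$ variables. Alternatively, one may read multiplicativity off the explicit right-hand side, using that $\phi$ is multiplicative, that $(m_1,\ldots,m_r)\mapsto\lcm[m_1,\ldots,m_r]$ is a multiplicative function of several variables (as recorded in Section~\ref{section_prelim}), that products and quotients of nonvanishing multiplicative functions are multiplicative, and that the convolution \eqref{convo_functions} preserves multiplicativity.

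Since the whole argument is an immediate substitution into an already-established theorem, there is essentially no obstacle; the only point that warrants a line of justification is the evaluation $N_G(d_1,\ldots,d_r)=1$, i.e. observing that the system $x\equiv 0\pmod{d_i}$ collapses to the single congruence modulo $\lcm[d_1,\ldots,d_r]$ and therefore has a unique residue as solution.
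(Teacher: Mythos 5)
Your proposal is correct and follows exactly the route the paper intends: specialize Theorem~\ref{theorem_S} to $f_i=\id$ and $g_i(x)=x$, use $\mu*\id=\phi$, observe that $N_G(d_1,\ldots,d_r)=1$ since the system $x\equiv 0 \pmod{d_i}$ collapses to a single congruence modulo $\lcm[d_1,\ldots,d_r]$, and obtain multiplicativity from the preceding corollary. Nothing is missing.
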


For other special choices of $F$ and $G$ similar results can be
derived if the values $N_G(d_1,\ldots,d_r)$ are known, but we turn
our attention to the function $R_F^{(G)}(m_1,\ldots,m_r)$ defined by
\eqref{def_func_R}.

\begin{theorem} \label{theorem_R} If $F$ and $G$ are arbitrary systems
of arithmetical functions and polynomials with integer coefficients,
respectively, then for any $m_1,\ldots,m_r\in \N$,
\begin{equation} \label{repr_R}
R_F^{(G)}(m_1,\ldots,m_r) = \sum_{d_1\mid m_1,\ldots,d_r\mid m_r}
\frac{(\mu*f_1)(d_1) \cdots (\mu*f_r)(d_r)}
{\phi(\lcm[d_1,\ldots,d_r])} \eta_G(d_1,\ldots,d_r),
\end{equation}
which does not depend on $M$.
\end{theorem}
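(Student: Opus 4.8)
The plan is to follow the same Möbius‑inversion strategy used in the proof of Theorem \ref{theorem_S}, the only new ingredient being that the naive count of residues in an arithmetic progression is replaced by the count furnished by Lemma \ref{lemma_phi}. Writing $f_i = \1 * (\mu * f_i)$ for $1\le i\le r$ and expanding the inner divisor sums, I would arrive at
\[
R_F^{(G)}(m_1,\ldots,m_r) = \frac1{\phi(M)} \sum_{d_1\mid m_1,\ldots,d_r\mid m_r} (\mu*f_1)(d_1)\cdots(\mu*f_r)(d_r)\, T(d_1,\ldots,d_r),
\]
where $T(d_1,\ldots,d_r)$ denotes the number of integers $k$ with $1\le k\le M$, $\gcd(k,M)=1$, and $g_i(k)\equiv 0 \pmod{d_i}$ for every $i$. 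The whole statement then reduces to the identity $T(d_1,\ldots,d_r) = \eta_G(d_1,\ldots,d_r)\,\phi(M)/\phi(d)$, where $d:=\lcm[d_1,\ldots,d_r]$.

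To establish this, I would first note that $d_i\mid m_i\mid m\mid M$, so $d\mid M$. The congruences $g_i(k)\equiv 0\pmod{d_i}$ depend only on the class of $k$ modulo $d$. Moreover, since $d_i\mid M$, every $k$ with $\gcd(k,M)=1$ automatically satisfies $\gcd(k,d_i)=1$, hence $\gcd(k,d)=1$; conversely a residue $x$ modulo $d$ with $\gcd(x,d)=1$ has $\gcd(x,d_i)=1$ for each $i$, because $d$ is the least common multiple of the $d_i$. Consequently the residue classes $x$ modulo $d$ that are coprime to $d$ and solve the system $g_i(x)\equiv 0\pmod{d_i}$ ($1\le i\le r$) are precisely the $\eta_G(d_1,\ldots,d_r)$ solutions appearing in the definition of $\eta_G$. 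Applying Lemma \ref{lemma_phi} with $n=M$, each such class contributes exactly $\phi(M)/\phi(d)$ values of $k$ in $[1,M]$ coprime to $M$, so $T(d_1,\ldots,d_r) = \eta_G(d_1,\ldots,d_r)\,\phi(M)/\phi(d)$, as wanted.

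Substituting this back into the displayed formula and cancelling the factor $\phi(M)$ yields \eqref{repr_R}; since the resulting right‑hand side makes no reference to $M$, the formula is independent of $M$. The one step that needs genuine care is the equivalence between being coprime to $d=\lcm[d_1,\ldots,d_r]$ and being coprime to each $d_i$, together with the verification that the hypotheses of Lemma \ref{lemma_phi} ($d\mid M$ and the coprimality of the relevant residue) are met; everything else is a routine repetition of the computation carried out for $S_F^{(G)}$ in Theorem \ref{theorem_S}.
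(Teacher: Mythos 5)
Your proposal is correct and follows essentially the same route as the paper: the same M\"obius decomposition $f_i=\1*(\mu*f_i)$ as in Theorem \ref{theorem_S}, followed by evaluating the inner count via Lemma \ref{lemma_phi}. You merely spell out in more detail the step the paper leaves implicit, namely that the admissible residue classes modulo $\lcm[d_1,\ldots,d_r]$ are exactly the $\eta_G(d_1,\ldots,d_r)$ solutions coprime to each $d_i$, each contributing $\phi(M)/\phi(\lcm[d_1,\ldots,d_r])$ values of $k$.
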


\begin{proof} Similar to the proof of Theorem \ref{theorem_S},
\begin{equation*}
R_F^{(G)}(m_1,\ldots,m_r)= \frac1{\phi(M)} \sum_{\substack{k=1\\
\gcd(k,M)=1}}^M \sum_{d_1\mid \gcd(g_1(k),m_1)} (\mu*f_1)(d_1)
\cdots \sum_{d_r\mid \gcd(g_r(k),m_r)} (\mu*f_r)(d_r)
\end{equation*}
\begin{equation*}
= \frac1{\phi(M)} \sum_{d_1\mid m_1,\ldots,d_r\mid m_r}
(\mu*f_1)(d_1) \cdots (\mu*f_r)(d_r) \sum_{\substack{1\le k\le M
\\ \gcd(k,M)=1\\ g_1(k)\equiv 0 \text{ (mod $d_1$)},\ldots, g_r(k)\equiv 0 \text{ (mod $d_r$)}}}
1,
\end{equation*}
where the inner sum is $(\phi(M)/\phi(\lcm[d_1,\ldots,d_r]))
\eta_G(d_1,\ldots,d_r)$ by Lemma \ref{lemma_phi}.
\end{proof}

In the one variable case ($r=1$) Theorem \ref{theorem_R} is a
special case of \cite[Theorem]{HauWan1996}, giving, with $f_1=f$,
$g_1=g$, $m_1=m$,
\begin{equation} \label{R_f_g}
R_f^{(g)}(m):= \frac1{\phi(m)} \sum_{\substack{k=1\\
\gcd(k,m)=1}}^m f(\gcd(g(k),m)) = \sum_{d\mid m}
\frac{(\mu*f)(d)}{\phi(d)}\eta_g(d),
\end{equation}
and for $f=\id$ this reduces to \eqref{Richards_id}.

\begin{corollary} Assume that $g_1=\ldots =g_r=g$ and $m_1,\ldots,m_r$ are pairwise
relatively prime. Then
\begin{equation} \label{case_pairw_rel_prime}
R_F^{(G)}(m_1,\ldots,m_r)= R_{f_1}^{(g)}(m_1) \cdots
R_{f_r}^{(g)}(m_r).
\end{equation}
\end{corollary}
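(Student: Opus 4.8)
The plan is to reduce to the one-variable identity \eqref{R_f_g} by a Chinese remainder theorem argument. By Theorem \ref{theorem_R} the quantity $R_F^{(G)}(m_1,\ldots,m_r)$ does not depend on the multiple $M$ of $m=\lcm[m_1,\ldots,m_r]$ occurring in its definition \eqref{def_func_R}; since $m_1,\ldots,m_r$ are pairwise relatively prime we have $m=m_1\cdots m_r$, and it is convenient to take $M=m$.

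Next I would invoke the ring isomorphism $\Z/m\Z \cong \Z/m_1\Z\times\cdots\times\Z/m_r\Z$ furnished by the Chinese remainder theorem, writing a residue $k$ modulo $m$ as a tuple $(k_1,\ldots,k_r)$ with $k_i\equiv k \pmod{m_i}$. Two elementary observations are needed: first, $\gcd(k,m)=1$ holds if and only if $\gcd(k_i,m_i)=1$ for every $i$; second, because $g$ has integer coefficients, $g(k)\equiv g(k_i) \pmod{m_i}$, so $\gcd(g(k),m_i)=\gcd(g(k_i),m_i)$. Hence the summand $f_1(\gcd(g(k),m_1))\cdots f_r(\gcd(g(k),m_r))$ equals $f_1(\gcd(g(k_1),m_1))\cdots f_r(\gcd(g(k_r),m_r))$, a product whose $i$-th factor depends only on $k_i$. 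Consequently the sum over $k$ in \eqref{def_func_R} (with $M=m$) splits as a product of $r$ independent sums, one over each $k_i$ running modulo $m_i$ with $\gcd(k_i,m_i)=1$; combining this with the multiplicativity relation $\phi(m)=\phi(m_1)\cdots\phi(m_r)$ gives
\[
R_F^{(G)}(m_1,\ldots,m_r)=\prod_{i=1}^r \frac{1}{\phi(m_i)}\sum_{\substack{k_i=1\\ \gcd(k_i,m_i)=1}}^{m_i} f_i(\gcd(g(k_i),m_i)) = \prod_{i=1}^r R_{f_i}^{(g)}(m_i),
\]
the last equality being \eqref{R_f_g}.

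Alternatively, one may argue directly from the closed form \eqref{repr_R}: the divisors $d_i\mid m_i$ are then pairwise coprime, so $\lcm[d_1,\ldots,d_r]=d_1\cdots d_r$ and $\phi(\lcm[d_1,\ldots,d_r])=\phi(d_1)\cdots\phi(d_r)$, while $\eta_G(d_1,\ldots,d_r)=\eta_g(d_1)\cdots\eta_g(d_r)$ (again by the Chinese remainder theorem, as in the proof of Lemma \ref{lemma_sim_cong}); the multiple sum in \eqref{repr_R} then factors into a product of single sums of the shape appearing in \eqref{R_f_g}. Either way, there is no serious obstacle here: the only point requiring care is the reduction modulo $m_i$ of the polynomial values and of the coprimality conditions, and this is precisely where the two hypotheses --- that the $m_i$ are pairwise coprime and that $g_1=\cdots=g_r=g$ --- are used.
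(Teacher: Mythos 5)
Your proposal is correct, and in fact it contains two complete arguments. The ``alternatively'' paragraph is essentially the paper's own proof verbatim: the paper works from the closed form \eqref{repr_R}, observes that for pairwise coprime $d_i\mid m_i$ one has $\lcm[d_1,\ldots,d_r]=d_1\cdots d_r$, $\phi(d_1\cdots d_r)=\phi(d_1)\cdots\phi(d_r)$ and $\eta_G(d_1,\ldots,d_r)=\eta_g(d_1\cdots d_r)=\eta_g(d_1)\cdots\eta_g(d_r)$, and then factors the multiple sum into the $r$ single sums of \eqref{R_f_g}. Your primary argument is a genuinely different route: you go back to the defining sum \eqref{def_func_R}, choose $M=m=m_1\cdots m_r$ (legitimate, since Theorem \ref{theorem_R} guarantees independence of $M$), and split the index $k$ via the Chinese remainder theorem into $(k_1,\ldots,k_r)$, using $\gcd(g(k),m_i)=\gcd(g(k_i),m_i)$ and $\phi(m)=\phi(m_1)\cdots\phi(m_r)$ to factor the sum directly. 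That version is more self-contained in the sense that it does not need the evaluated formula \eqref{repr_R} at all (only the $M$-independence), and it makes transparent exactly where the two hypotheses enter; the paper's version is shorter because all the combinatorial work has already been packaged into Theorem \ref{theorem_R} and the multiplicativity of $\eta_g$. Both are sound.
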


\begin{proof} For any $d_1\mid m_1, \ldots, d_r\mid m_r$,
$\eta_G(\lcm[d_1,\ldots,d_r])=\eta_g(d_1\cdots d_r)=
\eta_g(d_1)\cdots \eta_g(d_r)$ and obtain from \eqref{repr_R} that
\begin{equation*}
R_F^{(G)}(m_1,\ldots,m_r) = \sum_{d_1\mid m_1}
\frac{(\mu*f_1)(d_1)}{\phi(d_1)}\eta_g(d_1) \cdots \sum_{d_r\mid
m_r} \frac{(\mu*f_r)(d_r)}{\phi(d_r)}\eta_g(d_r),
\end{equation*}
giving \eqref{case_pairw_rel_prime} using the notation of
\eqref{R_f_g}.
\end{proof}

\begin{corollary} If $F$ is a system of multiplicative arithmetical functions and $G$ is any system of
polynomials with integer coefficients, then the function
$(m_1,\ldots,m_r) \mapsto R_F^{(G)}(m_1,\ldots,m_r)$ is
multiplicative.
\end{corollary}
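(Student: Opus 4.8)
The plan is to deduce the multiplicativity of $R_F^{(G)}$ directly from the representation \eqref{repr_R} established in Theorem \ref{theorem_R}, exactly as was done for $S_F^{(G)}$ in the corresponding corollary to Theorem \ref{theorem_S}. The key observation is that \eqref{repr_R} exhibits $R_F^{(G)}$ as a convolution (in the sense of \eqref{convo_functions}) of multiplicative functions of $r$ variables, and convolution preserves multiplicativity.

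First I would record that, since each $f_i$ is multiplicative (as a function of one variable), each $\mu * f_i$ is multiplicative, and hence the function $(d_1,\ldots,d_r)\mapsto (\mu*f_1)(d_1)\cdots(\mu*f_r)(d_r)$ is a multiplicative function of $r$ variables by the product-of-one-variable-functions principle recalled in Section \ref{section_prelim}. Next, the function $(d_1,\ldots,d_r)\mapsto \lcm[d_1,\ldots,d_r]$ is multiplicative, so $(d_1,\ldots,d_r)\mapsto \phi(\lcm[d_1,\ldots,d_r])$ is multiplicative as well (composition of the one-variable multiplicative $\phi$ with a multiplicative function), and being nonvanishing its reciprocal $1/\phi(\lcm[d_1,\ldots,d_r])$ is multiplicative. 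Finally, $\eta_G$ is multiplicative by Lemma \ref{lemma_sim_cong}. The product of these multiplicative functions of $r$ variables is again multiplicative, so the summand in \eqref{repr_R} is a multiplicative function of $(d_1,\ldots,d_r)$. Writing \eqref{repr_R} as
\begin{equation*}
R_F^{(G)}(m_1,\ldots,m_r) = \bigl( h * \1 \bigr)(m_1,\ldots,m_r), \qquad h(d_1,\ldots,d_r) := \frac{(\mu*f_1)(d_1)\cdots(\mu*f_r)(d_r)}{\phi(\lcm[d_1,\ldots,d_r])}\,\eta_G(d_1,\ldots,d_r),
\end{equation*}
where $\1$ denotes the constant function $1$ on $\N^r$ (itself multiplicative), and invoking the fact that convolution preserves multiplicativity, we conclude that $R_F^{(G)}$ is multiplicative.

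There is really no serious obstacle here: the only point requiring a word of care is that $R_F^{(G)}$ is a priori defined through a sum over $k$ modulo $M$, so one must make sure the argument is carried out on the representation \eqref{repr_R} rather than on the defining formula \eqref{def_func_R} — but Theorem \ref{theorem_R} has already done the work of converting one into the other and of checking independence of $M$. One should also note that $R_F^{(G)}$ is nonzero, e.g. because the $d_1=\cdots=d_r=1$ term contributes $(\mu*f_1)(1)\cdots(\mu*f_r)(1)\,\eta_G(1,\ldots,1) = f_1(1)\cdots f_r(1) \neq 0$ when the $f_i$ are multiplicative (as multiplicative functions are nonvanishing at $1$), so the "nonzero" requirement in the definition of multiplicativity is met.
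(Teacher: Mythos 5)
Your proof is correct and follows the same route as the paper: the paper's entire argument is the one-line observation that, by Theorem \ref{theorem_R} and Lemma \ref{lemma_sim_cong}, $R_F^{(G)}$ is a convolution of multiplicative functions and hence multiplicative. You have simply spelled out the details (multiplicativity of the summand $h$ via the product and composition principles of Section \ref{section_prelim}, plus the nonvanishing check at $(1,\ldots,1)$) that the paper leaves implicit.
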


\begin{proof} By Theorem \ref{theorem_R} and Lemma
\ref{lemma_sim_cong} the function $R_F^{(G)}$ is the convolution of
multiplicative functions, hence it is multiplicative.
\end{proof}

In case of multiplicative functions $f_i$ ($1\le i\le r$) we can
assume that $m_i>1$ ($1\le i\le r$), since for $m_i=1$ the
corresponding factors of \eqref{def_func_R} are equal to $1$.

\begin{corollary} {\rm ($f_1=\id_{t_1},\ldots, f_r=\id_{t_r}$)}
\begin{equation*}
R_{t_1,\ldots,t_r}^{(G)}(m_1,\ldots,m_r):= \frac1{\phi(M)} \sum_{\substack{k=1\\
\gcd(k,M)=1}}^M (\gcd(g_1(k),m_1))^{t_1} \cdots
(\gcd(g_r(k),m_r))^{t_r}
\end{equation*}
\begin{equation} = \sum_{d_1\mid m_1,\ldots,d_r\mid m_r}
\frac{\phi_{t_1}(d_1) \cdots \phi_{t_r}(d_r)}
{\phi(\lcm[d_1,\ldots,d_r])} \eta_G(d_1,\ldots,d_r),
\end{equation}
representing a multiplicative function.
\end{corollary}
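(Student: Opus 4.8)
The plan is to obtain this corollary as an immediate specialization of Theorem \ref{theorem_R}, the only arithmetical ingredient beyond that theorem being the classical convolution identity $\mu * \id_t = \phi_t$.

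First I would put $f_i = \id_{t_i}$ for $1 \le i \le r$ in the definition \eqref{def_func_R}. Since $\id_{t_i}(\gcd(g_i(k),m_i)) = (\gcd(g_i(k),m_i))^{t_i}$, the function $R_F^{(G)}$ becomes exactly the sum $R_{t_1,\ldots,t_r}^{(G)}(m_1,\ldots,m_r)$ written in the statement. Applying Theorem \ref{theorem_R} then gives
\[
R_{t_1,\ldots,t_r}^{(G)}(m_1,\ldots,m_r) = \sum_{d_1\mid m_1,\ldots,d_r\mid m_r} \frac{(\mu*\id_{t_1})(d_1) \cdots (\mu*\id_{t_r})(d_r)}{\phi(\lcm[d_1,\ldots,d_r])} \eta_G(d_1,\ldots,d_r),
\]
and, again by that theorem, the right-hand side does not depend on $M$.

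It remains only to identify $\mu * \id_t$ with $\phi_t$. Here $\id_t$ is multiplicative, hence so is $\mu * \id_t$, and on a prime power one computes $(\mu*\id_t)(p^a) = p^{at} - p^{(a-1)t} = p^{at}(1 - p^{-t})$, which equals $\phi_t(p^a)$; multiplicativity then gives $(\mu*\id_t)(n) = \phi_t(n)$ for every $n$. Substituting $\mu*\id_{t_i} = \phi_{t_i}$ into each factor of the display above yields the asserted formula. Finally, for the multiplicativity statement it suffices to note that each $\id_{t_i}$ is a multiplicative function of one variable, so $F = (\id_{t_1},\ldots,\id_{t_r})$ is a system of multiplicative functions, and the corollary preceding this one (multiplicativity of $R_F^{(G)}$ for multiplicative $F$) applies verbatim. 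I do not anticipate any genuine obstacle: the whole argument is a short unwinding of the machinery already established, and the one computational point, the evaluation $\mu*\id_t = \phi_t$, is entirely routine.
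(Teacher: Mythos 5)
Your proposal is correct and matches the paper's intent exactly: the paper states this corollary without proof, treating it as the immediate specialization $f_i=\id_{t_i}$ of Theorem \ref{theorem_R} together with the standard identity $\mu*\id_t=\phi_t$, and the multiplicativity claim follows from the preceding corollary just as you say. Nothing is missing.
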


\begin{corollary} \label{cor_f_equal} {\rm ($f_1=\ldots =f_r=\id$)}
\begin{equation*}
R_r^{(G)}(m_1,\ldots,m_r):= \frac1{\phi(M)} \sum_{\substack{k=1\\
\gcd(k,M)=1}}^M \gcd(g_1(k),m_1) \cdots \gcd(g_r(k),m_r)
\end{equation*}
\begin{equation} = \sum_{d_1\mid m_1,\ldots,d_r\mid m_r}
\frac{\phi(d_1) \cdots \phi(d_r)} {\phi(\lcm[d_1,\ldots,d_r])}
\eta_G(d_1,\ldots,d_r),
\end{equation}
representing a (positive) integer valued multiplicative function.
\end{corollary}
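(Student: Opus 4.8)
The plan is to read off the displayed identity as the case $f_1=\cdots=f_r=\id$ of Theorem~\ref{theorem_R}. Since $\mu*\id=\phi$ (the classical relation $\sum_{d\mid n}\phi(d)=n$ in convolution form), the coefficients $(\mu*f_i)(d_i)$ in \eqref{repr_R} become $\phi(d_i)$, which yields the asserted right-hand side and, again by Theorem~\ref{theorem_R}, its independence of $M$. Multiplicativity is then immediate from the corollary stating that $R_F^{(G)}$ is multiplicative whenever $F$ is a system of multiplicative functions, applied to $F=(\id,\ldots,\id)$; equivalently, \eqref{repr_R} writes $R_r^{(G)}$ as the convolution of $\eta_G$, multiplicative by Lemma~\ref{lemma_sim_cong}, with the function $(d_1,\ldots,d_r)\mapsto\phi(d_1)\cdots\phi(d_r)/\phi(\lcm[d_1,\ldots,d_r])$, which is multiplicative by the closure properties recorded in Section~\ref{section_prelim}, being the quotient of the nonvanishing multiplicative functions $(d_1,\ldots,d_r)\mapsto\phi(d_1)\cdots\phi(d_r)$ and $(d_1,\ldots,d_r)\mapsto\phi(\lcm[d_1,\ldots,d_r])$.

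It remains to check that the right-hand side is a positive integer. Since $\eta_G(d_1,\ldots,d_r)\in\N_0$ by definition, it suffices to prove
\[
\frac{\phi(d_1)\cdots\phi(d_r)}{\phi(\lcm[d_1,\ldots,d_r])}\in\N
\qquad\text{for all } d_1,\ldots,d_r\in\N .
\]
I would argue locally at each prime. Writing $d_i=\prod_p p^{e_p(d_i)}$ and using the multiplicativity of $\phi$, the $p$-part of this quotient equals $\phi(p^{a_1})\cdots\phi(p^{a_r})/\phi(p^{a})$, where $a_i=e_p(d_i)$ and $a=\max_i a_i=e_p(\lcm[d_1,\ldots,d_r])$. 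If $a=0$ this is $1$; if $a\ge 1$, choose an index $i_0$ with $a_{i_0}=a$, so that the factor $\phi(p^{a_{i_0}})=\phi(p^{a})$ cancels the denominator and the $p$-part reduces to $\prod_{i\ne i_0}\phi(p^{a_i})$, a product of positive integers. Taking the product over the finitely many primes dividing $d_1\cdots d_r$ shows the quotient is a positive integer. Hence every summand on the right-hand side is a non-negative integer, so is the whole sum; and the summand with $d_1=\cdots=d_r=1$ equals $\eta_G(1,\ldots,1)=1$, whence the sum is strictly positive (positivity is in any case obvious from the left-hand side, each factor $\gcd(g_i(k),m_i)$ being $\ge 1$).

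I expect the only step not already available to be the integrality of $\phi(d_1)\cdots\phi(d_r)/\phi(\lcm[d_1,\ldots,d_r])$, and the point that makes the local argument go through is that the index realizing the maximal $p$-exponent contributes a $\phi$-factor matching the denominator exactly. I would also note in passing the harmless case $m_i=1$, where the only admissible $d_i$ is $1$ and both sides acquire a factor $1$, in accordance with a remark made above.
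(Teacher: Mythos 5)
Your proposal is correct and follows the paper's own route: specialize Theorem~\ref{theorem_R} via $\mu*\id=\phi$, invoke the multiplicativity corollary, and establish integrality of $\phi(d_1)\cdots\phi(d_r)/\phi(\lcm[d_1,\ldots,d_r])$ prime by prime, which is exactly the reduction to $\phi(p^{e_1})\cdots\phi(p^{e_r})/\phi(\lcm[p^{e_1},\ldots,p^{e_r}])$ that the paper records. You merely make explicit the cancellation of the denominator by the factor of maximal exponent, and the positivity observation, both of which the paper leaves implicit.
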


\begin{proof} The function $(m_1,\ldots,m_r)\mapsto \phi(m_1) \cdots
\phi(m_r)/\phi(\lcm[m_1,\ldots,m_r])$ is multiplicative and its
values are integers, since $\phi(p^{e_1}) \cdots
\phi(p^{e_r})/\phi(\lcm[p^{e_1}, \ldots, p^{e_r}])$ are integers for
any prime $p$ and any $e_1,\ldots,e_r\in \N$.
\end{proof}

\begin{corollary} {\rm ($f_1=\id_{t_1},\ldots, f_r=\id_{t_r}$, $g_1(x)=x-a_1$,..., $g_r(x)=x-a_r$)}

For any $\a:=(a_1,\ldots,a_r)\in \Z^r$,
\begin{equation*}
R_{t_1,\ldots,t_r}^{(\a)}(m_1,\ldots,m_r):= \frac1{\phi(M)} \sum_{\substack{k=1\\
\gcd(k,M)=1}}^M (\gcd(k-a_1,m_1))^{t_1} \cdots
(\gcd(k-a_r,m_r))^{t_r}
\end{equation*}
\begin{equation} \label{general_Menon_id_t} = \sum_{d_1\mid m_1,\ldots,d_r\mid m_r}
\frac{\phi_{t_1}(d_1) \cdots
\phi_{t_r}(d_r)}{\phi(\lcm[d_1,\ldots,d_r])} \eta^{(\a)}(d_1,\ldots
d_r),
\end{equation}
where $\eta^{(\a)}(d_1,\ldots,d_r)$ is defined by \eqref{eta}.
\end{corollary}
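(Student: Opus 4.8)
The plan is to specialize Theorem~\ref{theorem_R} to $f_i=\id_{t_i}$ and $g_i(x)=x-a_i$ ($1\le i\le r$) and to simplify the two kinds of factors occurring in the representation~\eqref{repr_R}. First I would record the elementary identity $\mu*\id_t=\phi_t$ valid for every $t\in\C$: both sides are multiplicative, and on a prime power one computes $(\mu*\id_t)(p^a)=p^{ta}-p^{t(a-1)}=p^{ta}(1-p^{-t})=\phi_t(p^a)$; the case $t=1$ is the familiar $\mu*\id=\phi$. Hence $(\mu*f_i)(d_i)=\phi_{t_i}(d_i)$ for each $i$, so the numerators in \eqref{repr_R} become precisely $\phi_{t_1}(d_1)\cdots\phi_{t_r}(d_r)$, as claimed in \eqref{general_Menon_id_t}.

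It then remains to identify, for the system $G=(g_1,\ldots,g_r)$ with $g_i(x)=x-a_i$, the function $\eta_G(d_1,\ldots,d_r)$ with the quantity $\eta^{(\a)}(d_1,\ldots,d_r)$ of \eqref{eta}. For this choice the congruences~\eqref{sim_cong_m} (with $d_i$ in place of $m_i$) are simply $x\equiv a_i\pmod{d_i}$ for $1\le i\le r$. By the Chinese remainder theorem in the form permitting non-coprime moduli, this system is solvable if and only if $\gcd(d_i,d_j)\mid a_i-a_j$ for all $1\le i,j\le r$, and when solvable it has a unique solution $x$ modulo $\lcm[d_1,\ldots,d_r]$. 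Furthermore, for any such solution $x$ one has $\gcd(x,d_i)=\gcd(a_i,d_i)$, since $x\equiv a_i\pmod{d_i}$; therefore the extra coprimality conditions $\gcd(x,d_i)=1$ appearing in the definition of $\eta_G$ are equivalent to $\gcd(a_i,d_i)=1$ for $1\le i\le r$. Combining these observations, $\eta_G(d_1,\ldots,d_r)=1$ exactly when $\gcd(d_i,a_i)=1$ for all $i$ and $\gcd(d_i,d_j)\mid a_i-a_j$ for all $i,j$, and $\eta_G(d_1,\ldots,d_r)=0$ otherwise; that is, $\eta_G=\eta^{(\a)}$.

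Substituting these two identifications into \eqref{repr_R} gives \eqref{general_Menon_id_t}, and the independence of the right-hand side from $M$ is inherited from Theorem~\ref{theorem_R}. The only substantive step is the reduction $\eta_G=\eta^{(\a)}$ — and within it the solvability criterion $\gcd(d_i,d_j)\mid a_i-a_j$ for the simultaneous linear congruences; everything else is routine bookkeeping. Alternatively, one could invoke the multiplicativity of $\eta_G$ from Lemma~\ref{lemma_sim_cong} to reduce this verification to prime powers $d_i=p^{e_i}$, where the moduli are linearly ordered by divisibility and the claim is transparent.
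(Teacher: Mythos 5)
Your proposal is correct and follows essentially the same route as the paper: specialize Theorem~\ref{theorem_R}, use $\mu*\id_t=\phi_t$ for the numerators, and identify $\eta_G$ with $\eta^{(\a)}$ via the generalized Chinese remainder theorem together with the observation that any solution $x\equiv a_i\pmod{d_i}$ satisfies $\gcd(x,d_i)=\gcd(a_i,d_i)$. The paper's proof is exactly this reduction, stated slightly more tersely.
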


\begin{proof} It is well-known that for $d_1,\ldots,d_r\in \N$ the simultaneous congruences
$x \equiv a_1$ (mod $d_1$), ..., $x \equiv a_r$ (mod $d_r$) admit
solutions iff $\gcd(d_i,d_j)\mid a_i-a_j$ ($1\le i,j\le r$) and in
this case there is a unique solution $\overline{x}$ (mod
$\lcm[d_1,\ldots,d_r]$). Here $\gcd(\overline{x},d_1)=
\gcd(a_1,d_1)$, ..., $\gcd(\overline{x},d_r)= \gcd(a_r,d_r)$ and
obtain for the values of $\eta^{(\a)}(d_1,\ldots,d_r)$ formula
\eqref{eta}.
\end{proof}

For $t_1=\ldots =t_r=1$ we obtain from \eqref{general_Menon_id_t}
formula \eqref{my_general_Menon_id} given in the Introduction.

\begin{corollary} {\rm ($r=2$, $f_1=f_2=\id$)}
\begin{equation*}
R_2^{(G)}(m_1,m_2):= \frac1{\phi(M)} \sum_{\substack{k=1\\
\gcd(k,M)=1}}^M \gcd(g_1(k),m_1) \gcd(g_2(k),m_2) \end{equation*}
\begin{equation} = \sum_{d_1\mid m_1, d_2\mid m_2}
\phi(\gcd(d_1,d_2)) \eta_G(d_1,d_2).
\end{equation}
\end{corollary}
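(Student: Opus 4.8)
The plan is to derive this corollary directly from Theorem~\ref{theorem_R} by specializing the data. We take $r=2$, the system of arithmetical functions $F=(f_1,f_2)$ with $f_1=f_2=\id$, and an arbitrary system of polynomials $G=(g_1,g_2)$ with integer coefficients. With these choices, $(\mu*f_i)(d_i)=(\mu*\id)(d_i)=\phi(d_i)$ for $i=1,2$, so the general representation \eqref{repr_R} becomes
\begin{equation*}
R_2^{(G)}(m_1,m_2)= \sum_{d_1\mid m_1,\, d_2\mid m_2} \frac{\phi(d_1)\,\phi(d_2)}{\phi(\lcm[d_1,d_2])}\, \eta_G(d_1,d_2).
\end{equation*}

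The only remaining point is the elementary identity $\phi(d_1)\phi(d_2)/\phi(\lcm[d_1,d_2]) = \phi(\gcd(d_1,d_2))$ for all $d_1,d_2\in\N$. I would prove this by the multiplicativity of both sides (as functions of the pair $(d_1,d_2)$) and checking it on prime powers: for $d_1=p^{e_1}$, $d_2=p^{e_2}$ with, say, $e_1\le e_2$ and $e_1\ge 1$, one has $\lcm=p^{e_2}$, $\gcd=p^{e_1}$, and $\phi(p^{e_1})\phi(p^{e_2})/\phi(p^{e_2})=\phi(p^{e_1})=\phi(\gcd)$; the cases where one exponent is $0$ are immediate since then $\gcd=1$ and $\lcm$ equals the other prime power. (Alternatively this is just the standard formula $\phi(d_1)\phi(d_2)=\phi(\gcd(d_1,d_2))\phi(\lcm[d_1,d_2])$.) Substituting this into the display above yields exactly the claimed formula.

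I do not anticipate a genuine obstacle here: the corollary is a routine specialization, and the only substantive ingredient beyond Theorem~\ref{theorem_R} is the prime-power verification of the $\phi$-identity, which is a one-line check. Note this also recovers \eqref{my_general_Menon_id_r_2} upon taking $g_1(x)=x-a$, $g_2(x)=x-a$ with $\gcd(a,m)=1$, in which case $\eta_G(d_1,d_2)=1$ for all admissible $d_1,d_2$ by the computation of $\eta^{(\a)}$ in the preceding corollary.
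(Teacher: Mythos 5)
Your proof is correct and follows essentially the same route as the paper: specialize Theorem~\ref{theorem_R} to $r=2$, $f_1=f_2=\id$ (so $\mu*\id=\phi$), and then apply the identity $\phi(d_1)\phi(d_2)=\phi(\gcd(d_1,d_2))\phi(\lcm[d_1,d_2])$, which is exactly the one-line argument the paper gives. Your prime-power verification of that identity is a harmless elaboration of what the paper treats as known.
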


\begin{proof} Use that $\phi(a)\phi(b)= \phi(\gcd(a,b))\phi(\lcm[a,b])$
for any $a,b\in \N$. Note that this holds for any multiplicative
function written instead of $\phi$.
\end{proof}

\begin{corollary} {\rm ($r=2$, $f_1=f_2=\id$, $g_1(x)=x-a_1$, $g_2(x)=x-a_2$, $a_1,a_2\in \Z$)}
\begin{equation*}
R_2^{(a_1,a_2)}(m_1,m_2):= \frac1{\phi(M)} \sum_{\substack{k=1\\
\gcd(k,M)=1}}^M \gcd(k-a_1,m_1) \gcd(k-a_2,m_2)
\end{equation*}
\begin{equation} \label{R_r_2_spec}  = \sum_{\substack{d_1\mid m_1, d_2\mid m_2\\
\gcd(d_1,a_1)=1, \gcd(d_2,a_2)=1\\ \gcd(d_1,d_2)\mid a_1-a_2}}
\phi(\gcd(d_1,d_2)).
\end{equation}
\end{corollary}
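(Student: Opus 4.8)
The plan is to read off this identity as the common specialization of the two corollaries immediately preceding it, so essentially no new work is required. First I would apply the corollary above obtained by taking $r=2$ and $f_1=f_2=\id$, which asserts that for \emph{any} system $G=(g_1,g_2)$ of polynomials with integer coefficients
\begin{equation*}
\frac1{\phi(M)} \sum_{\substack{k=1\\ \gcd(k,M)=1}}^M \gcd(g_1(k),m_1)\gcd(g_2(k),m_2) = \sum_{d_1\mid m_1,\, d_2\mid m_2} \phi(\gcd(d_1,d_2))\, \eta_G(d_1,d_2).
\end{equation*}
Recall that this in turn rests on Theorem \ref{theorem_R} together with $(\mu*\id)=\phi$ and the multiplicative identity $\phi(a)\phi(b)=\phi(\gcd(a,b))\phi(\lcm[a,b])$, so all of it is already available.

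Next I would specialize $g_1(x)=x-a_1$, $g_2(x)=x-a_2$ and evaluate $\eta_G(d_1,d_2)$. By definition this is the number of residues $x$ modulo $\lcm[d_1,d_2]$ with $x\equiv a_1\ (\mathrm{mod}\ d_1)$, $x\equiv a_2\ (\mathrm{mod}\ d_2)$ and $\gcd(x,d_1)=\gcd(x,d_2)=1$. As already noted in the proof of the corollary giving \eqref{general_Menon_id_t}, the Chinese remainder theorem shows that this system is solvable iff $\gcd(d_1,d_2)\mid a_1-a_2$, in which case the solution $\overline x$ is unique modulo $\lcm[d_1,d_2]$ and satisfies $\gcd(\overline x,d_i)=\gcd(a_i,d_i)$ for $i=1,2$. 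Hence $\eta_G(d_1,d_2)=\eta^{(\a)}(d_1,d_2)$, the indicator given by \eqref{eta}: it equals $1$ precisely when $\gcd(d_1,a_1)=\gcd(d_2,a_2)=1$ and $\gcd(d_1,d_2)\mid a_1-a_2$, and $0$ otherwise.

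Substituting this value into the displayed representation and discarding the terms where $\eta^{(\a)}(d_1,d_2)=0$ gives exactly \eqref{R_r_2_spec}. Equivalently, one may quote \eqref{general_Menon_id_t} directly with $t_1=t_2=1$ and simplify $\phi(d_1)\phi(d_2)/\phi(\lcm[d_1,d_2])=\phi(\gcd(d_1,d_2))$. There is no genuine obstacle here; the only point requiring a moment's care is that the two specializations — to $r=2$ with $f_i=\id$ on the one hand, and to the linear polynomials $g_i(x)=x-a_i$ on the other — are simultaneously legitimate, which they plainly are since both corollaries hold for arbitrary admissible data. As a sanity check, when $a_1=a_2=a$ with $\gcd(a,m)=1$ every pair $(d_1,d_2)$ with $d_1\mid m_1$, $d_2\mid m_2$ meets all three constraints, and the right-hand side of \eqref{R_r_2_spec} collapses to $\sum_{d_1\mid m_1,\, d_2\mid m_2}\phi(\gcd(d_1,d_2))$, recovering \eqref{my_general_Menon_id_r_2}.
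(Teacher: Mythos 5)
Your proposal is correct and follows exactly the route the paper intends: the corollary is stated without proof precisely because it is the simultaneous specialization of the preceding corollary for $r=2$, $f_1=f_2=\id$ (resting on $\phi(a)\phi(b)=\phi(\gcd(a,b))\phi(\lcm[a,b])$) and the CRT evaluation $\eta_G(d_1,d_2)=\eta^{(\a)}(d_1,d_2)$ from \eqref{eta}. Nothing is missing, and your sanity check against \eqref{my_general_Menon_id_r_2} matches Corollary \ref{cor11}.
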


\begin{corollary} {\rm ($r=2$, $f_1=f_2=\id$, $g_1(x)=x-a_1$, $g_2(x)=x-a_2$, $|a_1-a_2|=1$)}

Let $a_1,a_2\in \Z$ with $|a_1-a_2|=1$. Then the multiplicative
function $R_2^{(a_1,a_2)}(m_1,m_2)$, given by \eqref{R_r_2_spec} can
be represented as
\begin{equation}
R_2^{(a_1,a_2)}(m_1,m_2) = \sum_{\substack{d_1\mid m_1, d_2\mid m_2\\
\gcd(d_1,a_1)=1, \gcd(d_2,a_2)=1 \\ \gcd(d_1,d_2)=1}} 1,
\end{equation} and for any prime $p$ and any $u,v\in \N$,
\begin{equation}
R_2^{(a_1,a_2)}(p^u,p^v) =
\begin{cases} u+v+1, & p\nmid a_1, p\nmid a_2,\\
u+1, & p\nmid a_1, p\mid a_2,\\
v+1, & p\mid a_1, p\nmid a_2,\\
1, & p\mid a_1, p\mid a_2.
\end{cases}
\end{equation}
\end{corollary}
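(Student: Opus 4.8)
The plan is to specialize the already-established formula \eqref{R_r_2_spec} to the case $|a_1-a_2|=1$. First I would observe that the condition $|a_1-a_2|=1$ forces $\gcd(a_1,a_2)=1$, and more to the point, for any $d_1\mid m_1$ and $d_2\mid m_2$ appearing in the sum (so with $\gcd(d_1,a_1)=1$ and $\gcd(d_2,a_2)=1$), the divisibility constraint $\gcd(d_1,d_2)\mid a_1-a_2$ becomes $\gcd(d_1,d_2)\mid 1$, i.e. $\gcd(d_1,d_2)=1$. Conversely, if $\gcd(d_1,d_2)=1$ then certainly $\gcd(d_1,d_2)\mid a_1-a_2$. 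Hence the index set in \eqref{R_r_2_spec} collapses to pairs $(d_1,d_2)$ with $\gcd(d_1,a_1)=1$, $\gcd(d_2,a_2)=1$, $\gcd(d_1,d_2)=1$, and on all such pairs the summand $\phi(\gcd(d_1,d_2))=\phi(1)=1$. This yields the first displayed formula, counting such pairs.

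Next I would evaluate this count at a prime power $(p^u,p^v)$, using the multiplicativity of $R_2^{(a_1,a_2)}$ asserted in the statement (which comes from the earlier corollary that $R_F^{(G)}$ is multiplicative when the $f_i$ are multiplicative, here $f_1=f_2=\id$). At the prime $p$ the admissible divisors of $p^u$ are $d_1\in\{1,p,\dots,p^u\}$ and of $p^v$ are $d_2\in\{1,p,\dots,p^v\}$. The coprimality condition $\gcd(d_1,d_2)=1$ means at least one of $d_1,d_2$ equals $1$. I would then split into the four cases according to whether $p$ divides $a_1$ and/or $a_2$. If $p\nmid a_1$ and $p\nmid a_2$, the conditions $\gcd(d_1,a_1)=1$ and $\gcd(d_2,a_2)=1$ are automatic, so we count pairs with $d_1=1$ (there are $v+1$ choices of $d_2$) or $d_2=1$ (there are $u+1$ choices of $d_1$), overlapping in the single pair $(1,1)$, giving $(u+1)+(v+1)-1=u+v+1$. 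If $p\nmid a_1$ but $p\mid a_2$, then $\gcd(d_2,a_2)=1$ forces $d_2=1$, so $\gcd(d_1,d_2)=1$ is automatic and $d_1$ ranges freely over $u+1$ values, giving $u+1$; the case $p\mid a_1$, $p\nmid a_2$ is symmetric, giving $v+1$. If $p\mid a_1$ and $p\mid a_2$, then both $d_1=1$ and $d_2=1$ are forced, giving the single pair $(1,1)$, hence $1$. (Note that $p\mid a_1$ and $p\mid a_2$ cannot simultaneously occur since $\gcd(a_1,a_2)=1$, but the formula is still correct as stated, so one need not dwell on this.)

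I do not anticipate any real obstacle here — the argument is a direct bookkeeping specialization of \eqref{R_r_2_spec}. The only point requiring a word of care is the passage from the global formula to the prime-power values, which is licensed precisely by multiplicativity; one should state explicitly that $R_2^{(a_1,a_2)}$ is multiplicative (as already recorded) so that knowing it on prime powers determines it everywhere, and that at the prime $p$ the relevant divisibility and coprimality conditions depend only on the $p$-adic data, namely $u$, $v$, and whether $p\mid a_1$, $p\mid a_2$. Everything else is the four-way case split above, each case being an elementary inclusion-exclusion count of lattice points in the $(d_1,d_2)$-grid.
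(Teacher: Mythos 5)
Your proposal is correct and follows exactly the route the paper intends: the corollary is stated without proof as an immediate specialization of \eqref{R_r_2_spec}, and your reduction of $\gcd(d_1,d_2)\mid a_1-a_2$ to $\gcd(d_1,d_2)=1$ together with the four-way case count at prime powers supplies precisely the omitted details. No gaps.
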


Now we deduce formula \eqref{my_general_Menon_id_r_2} given in the
Introduction.

\begin{corollary} {\rm ($r=2$, $f_1=f_2=\id$, $g_1(x)=g_2(x)=x-a$, $a\in \Z$, $\gcd(a,m)=1$)}
\label{cor11}

If $\gcd(a,m)=1$ then
\begin{equation*}
R_2^{(a)}(m_1,m_2):= \frac1{\phi(M)} \sum_{\substack{k=1\\
\gcd(k,M)=1}}^M \gcd(k-a,m_1) \gcd(k-a,m_2)
\end{equation*}
\begin{equation} \label{R_r_2_speci_1}  = \sum_{d_1\mid m_1, d_2\mid m_2}
\phi(\gcd(d_1,d_2)).
\end{equation}
\end{corollary}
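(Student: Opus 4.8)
The plan is to derive Corollary \ref{cor11} as the special case $a_1=a_2=a$ of the preceding corollary, namely formula \eqref{R_r_2_spec}. Under the hypothesis $\gcd(a,m)=1$ we have, for every divisor $d_1\mid m_1$ and $d_2\mid m_2$ (so that $d_1,d_2\mid m$), automatically $\gcd(d_1,a)=1$ and $\gcd(d_2,a)=1$; moreover $a_1-a_2=0$, so the condition $\gcd(d_1,d_2)\mid a_1-a_2$ is vacuously satisfied. Hence every pair $(d_1,d_2)$ with $d_1\mid m_1$, $d_2\mid m_2$ contributes, and the restricted sum in \eqref{R_r_2_spec} collapses to the full double sum $\sum_{d_1\mid m_1,\, d_2\mid m_2}\phi(\gcd(d_1,d_2))$, which is exactly \eqref{R_r_2_speci_1}.

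First I would recall that Corollary (the one giving \eqref{R_r_2_spec}) is itself the instance $r=2$, $f_1=f_2=\id$, $g_1(x)=x-a_1$, $g_2(x)=x-a_2$ of Theorem \ref{theorem_R} combined with the evaluation \eqref{eta} of $\eta^{(\a)}(d_1,d_2)$; so nothing new about convolutions or the Cauchy–Frobenius setup is needed here. The only substantive observation is the divisibility remark: since $m=\lcm[m_1,m_2]$ and $d_i\mid m_i\mid m$, the coprimality $\gcd(a,m)=1$ forces $\gcd(a,d_i)=1$. Then I would simply substitute $a_1=a_2=a$ into \eqref{R_r_2_spec}, note that the three defining constraints become trivial, and read off \eqref{R_r_2_speci_1}. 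Multiplicativity of $R_2^{(a)}$ is inherited from the corresponding corollary (or directly from the corollary stating $R_F^{(G)}$ is multiplicative when $F$ is a system of multiplicative functions, since $\id$ is multiplicative).

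There is essentially no obstacle: the argument is a one-line specialization. The only point worth stating carefully is why the hypothesis is placed on $m$ rather than on $m_1$ and $m_2$ separately — it is because the left-hand side of \eqref{R_r_2_spec} already presupposes $m\mid M$, and $\gcd(a,m)=1$ is the natural single condition ensuring $\gcd(a,d_1)=\gcd(a,d_2)=1$ for all divisors involved. I would phrase the proof as: "Apply \eqref{R_r_2_spec} with $a_1=a_2=a$. Since $d_1\mid m_1\mid m$ and $d_2\mid m_2\mid m$, the assumption $\gcd(a,m)=1$ gives $\gcd(d_1,a)=\gcd(d_2,a)=1$; also $a_1-a_2=0$, so $\gcd(d_1,d_2)\mid a_1-a_2$ holds for every $d_1,d_2$. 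Therefore the sum in \eqref{R_r_2_spec} runs over all $d_1\mid m_1$, $d_2\mid m_2$, yielding \eqref{R_r_2_speci_1}." That is the whole proof.
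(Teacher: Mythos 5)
Your proposal is correct and matches the paper's (implicit) derivation: the corollary is stated there as a direct specialization of \eqref{R_r_2_spec} to $a_1=a_2=a$, with $\gcd(a,m)=1$ killing the coprimality constraints and $\gcd(d_1,d_2)\mid 0$ holding trivially. Nothing further is needed.
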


Note that other special systems of $F$ and $G$ can be considered
too. We give the following example.

\begin{corollary} {\rm ($r=2$, $f_1=f_2=\id$, $g_1(x)=g_2(x)=x^2-a$, $\gcd(a,m)=1$)}
  For every $m=\lcm[m_1,m_2]$ odd with $\gcd(a,m)=1$,
\begin{equation*}
\sum_{\substack{k=1\\
\gcd(k,m)=1}}^m \gcd(k^2-a,m_1) \gcd(k^2-a,m_2) \end{equation*}
\begin{equation}
= \phi(m) \sum_{d_1\mid m_1, d_2\mid m_2} \phi(\gcd(d_1,d_2))
\prod_{p\mid \lcm[d_1,d_2]} \left(1+\left(\frac{a}{p}\right)\right),
\end{equation}
which is a multiple of $\phi(m)$, where $\left( \frac{a}{p}\right)$
is the Legendre symbol.

In particular, for $a=1$ and every $m_1,m_2$ odd,
\begin{equation*}
\sum_{\substack{k=1\\
\gcd(k,m)=1}}^m \gcd(k^2-1,m_1) \gcd(k^2-1,m_2) \end{equation*}
\begin{equation}
= \phi(m) \sum_{d_1\mid m_1, d_2\mid m_2}
\phi(\gcd(d_1,d_2))2^{\omega(\lcm[d_1,d_2])},
\end{equation}
where $\omega(n)$ denotes the number of distinct prime factors of
$n$.
\end{corollary}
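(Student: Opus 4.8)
The plan is to apply the preceding corollary (the case $r=2$, $f_1=f_2=\id$, $g_1(x)=g_2(x)=x^2-a$) once we identify the function $\eta_G$ for the system $G=(x^2-a,\,x^2-a)$. By Theorem~\ref{theorem_R} (or the specialized corollary), the left-hand side equals $\phi(m)$ times $\sum_{d_1\mid m_1,d_2\mid m_2}\phi(\gcd(d_1,d_2))\,\eta_G(d_1,d_2)$, where here $\eta_G(d_1,d_2)$ counts residues $x$ modulo $\lcm[d_1,d_2]$ with $x^2\equiv a$ modulo $d_1$, $x^2\equiv a$ modulo $d_2$, and $\gcd(x,d_1)=\gcd(x,d_2)=1$. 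Since $x^2\equiv a\pmod{d_i}$ together with $\gcd(a,m)=1$ already forces $\gcd(x,d_i)=1$, the coprimality conditions are automatic, so $\eta_G(d_1,d_2)=N_G(d_1,d_2)$ is simply the number of solutions of $x^2\equiv a$ modulo $\lcm[d_1,d_2]$.

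The key step is then to evaluate $N_G(d_1,d_2)$ as a function of $\ell:=\lcm[d_1,d_2]$. Since $\ell\mid m$ is odd and $\gcd(a,\ell)=1$, the number of solutions of $x^2\equiv a\pmod{\ell}$ is multiplicative in $\ell$, and for an odd prime power $p^k$ the standard count is $1+\left(\frac{a}{p}\right)$ when $p\nmid a$ (which holds since $\gcd(a,m)=1$): there are exactly two square roots of $a$ modulo $p$ when $a$ is a quadratic residue and none otherwise, and by Hensel's lemma (the derivative $2x$ being a unit mod the odd prime $p$) this lifts uniquely to $p^k$. Hence $N_G(d_1,d_2)=\prod_{p\mid\ell}\left(1+\left(\frac{a}{p}\right)\right)=\prod_{p\mid\lcm[d_1,d_2]}\left(1+\left(\frac{a}{p}\right)\right)$. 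Substituting this into the corollary's formula yields the first displayed identity, and the claim that it is a multiple of $\phi(m)$ follows because each $\phi(\gcd(d_1,d_2))\prod_p(1+(a/p))$ is a nonnegative integer.

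For the specialization $a=1$, I would simply note that $\left(\frac{1}{p}\right)=1$ for every prime $p$, so $1+\left(\frac{1}{p}\right)=2$ and the product over primes dividing $\lcm[d_1,d_2]$ becomes $2^{\omega(\lcm[d_1,d_2])}$, giving the second displayed formula directly. No further obstacle arises here since $\gcd(1,m)=1$ holds trivially.

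The only mildly delicate point is the Hensel-lifting argument establishing that the number of square roots modulo an odd prime power $p^k$ (with $p\nmid a$) equals the number modulo $p$, i.e.\ $1+\left(\frac{a}{p}\right)$; this is classical (see, e.g., \cite[Th.\ 5.30]{Apo1976}) and uses crucially that $p$ is odd so that $\gcd(2x,p)=1$ for any solution $x$. Everything else is a routine substitution into the already-proved corollary together with the multiplicativity of $N_G$ recorded in Lemma~\ref{lemma_sim_cong}.
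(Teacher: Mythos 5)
Your proof is correct and takes essentially the same route as the paper: apply the $r=2$, $f_1=f_2=\id$ specialization of Theorem~\ref{theorem_R}, identify $\eta_G(d_1,d_2)$ with the number of solutions of $x^2\equiv a$ modulo $\lcm[d_1,d_2]$ (coprimality being automatic since $\gcd(a,m)=1$), and evaluate it as $\prod_{p\mid \lcm[d_1,d_2]}\left(1+\left(\frac{a}{p}\right)\right)$ using the classical count of square roots modulo odd prime powers together with multiplicativity. The paper's own proof is just a more compressed version of exactly this argument, so no comparison beyond that is needed.
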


\begin{proof} The congruence $x^2\equiv a$ (mod $p^e$) has
$1+\left(\frac{a}{p}\right)$ solutions (mod $p^e$) for any prime
$p>2$, $p\nmid a$ and any $e\in \N$. Therefore,
$\eta_g(n)=\prod_{p\mid n} \left(1+\left(\frac{a}{p}\right)\right)$
for any $n\in \N$ odd with $\gcd(n,a)=1$. Apply Corollary
\ref{cor_f_equal}.
\end{proof}

\section{The number of cyclic subgroups of the direct product of several cyclic groups}
\label{number_cyclic_subgroups}

\begin{theorem} Let $m_1,\ldots, m_r\in \N$. The number of cyclic subgroups of the group
$C_{m_1}\times \cdots \times C_{m_r}$ is given by the formula
\begin{equation} \label{final_form_cyclic_subgroups}
c(C_{m_1}\times \cdots \times C_{m_r})= \sum_{d_1\mid
m_1,\ldots,d_r\mid m_r} \frac{\phi(d_1) \cdots
\phi(d_r)}{\phi(\lcm[d_1,\ldots,d_r])},
\end{equation}
representing a multiplicative function of $r$ variables.

In particular, the number of cyclic subgroups of $C_{m_1}\times
C_{m_2}$ is
\begin{equation} \label{final_form_cyclic_subgroups_2}
c(C_{m_1}\times C_{m_2})= \sum_{d_1\mid m_1, d_2\mid m_2}
\phi(\gcd(d_1,d_2)),
\end{equation}
and for any prime $p$ and any $u,v\in \N$ with $u\ge v$,
\begin{equation} \label{speci_1_values}
c(C_{p^u}\times C_{p^v}) = 2\left(1+p+p^2+\ldots
+p^{v-1}\right)+\left( u-v+1 \right)p^v.
\end{equation}
\end{theorem}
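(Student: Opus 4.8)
The plan is to feed the Menon-type identity already obtained in this paper into the group-theoretic formula \eqref{form_cyclic_subgroups}. Recall that \eqref{form_cyclic_subgroups} gives
\[
c(C_{m_1}\times\cdots\times C_{m_r})= \frac1{\phi(q)} \sum_{\substack{k=1\\ \gcd(k,q)=1}}^q \gcd(k-1,m_1)\cdots \gcd(k-1,m_r),
\]
with $q=m_1\cdots m_r$, and $m:=\lcm[m_1,\ldots,m_r]$ divides $q$. So I would apply \eqref{my_general_Menon_id} (equivalently \eqref{general_Menon_id_t} with $t_1=\cdots=t_r=1$) with $M=q$ and $\a=(1,\ldots,1)$. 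For this shift vector the weight \eqref{eta} is identically $1$: the conditions $\gcd(d_i,1)=1$ and $\gcd(d_i,d_j)\mid 1-1=0$ are automatic. Hence the right-hand side of \eqref{form_cyclic_subgroups} equals $\sum_{d_1\mid m_1,\ldots,d_r\mid m_r} \phi(d_1)\cdots\phi(d_r)/\phi(\lcm[d_1,\ldots,d_r])$, which is \eqref{final_form_cyclic_subgroups}.

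For the multiplicativity claim I would cite Corollary \ref{cor_f_equal} (taken with $g_i(x)=x-1$, $1\le i\le r$), or observe directly that the right-hand side of \eqref{final_form_cyclic_subgroups} is the convolution \eqref{convo_functions} of the constant function $1$ with the multiplicative, integer-valued function $(m_1,\ldots,m_r)\mapsto \phi(m_1)\cdots\phi(m_r)/\phi(\lcm[m_1,\ldots,m_r])$, and convolution preserves multiplicativity. The specialization $r=2$ then follows from the identity $\phi(d_1)\phi(d_2)=\phi(\gcd(d_1,d_2))\,\phi(\lcm[d_1,d_2])$, which turns each summand into $\phi(\gcd(d_1,d_2))$ and gives \eqref{final_form_cyclic_subgroups_2}.

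The only step requiring an actual computation is \eqref{speci_1_values}. By the multiplicativity just recorded it suffices to evaluate \eqref{final_form_cyclic_subgroups_2} at $(p^u,p^v)$ with $u\ge v$:
\[
c(C_{p^u}\times C_{p^v})= \sum_{i=0}^u \sum_{j=0}^v \phi\bigl(p^{\min(i,j)}\bigr)= \sum_{\ell=0}^v N_\ell\, \phi(p^\ell),
\]
where $N_\ell=\#\{(i,j): 0\le i\le u,\ 0\le j\le v,\ \min(i,j)=\ell\}$. A direct count (pairs with both coordinates $\ge\ell$ and at least one equal to $\ell$, using $v\le u$) gives $N_\ell=(v-\ell+1)+(u-\ell+1)-1=u+v+1-2\ell$ for $0\le\ell\le v$. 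Splitting the sum as $(u+v+1)\sum_{\ell=0}^v \phi(p^\ell)-2\sum_{\ell=0}^v \ell\,\phi(p^\ell)$ and using the telescoping evaluations $\sum_{\ell=0}^v \phi(p^\ell)=p^v$ and $\sum_{\ell=0}^v \ell\,\phi(p^\ell)=vp^v-(1+p+\cdots+p^{v-1})$ yields $(u-v+1)p^v+2(1+p+\cdots+p^{v-1})$, which is exactly \eqref{speci_1_values}.

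I expect no serious obstacle: the core of the argument is just the already-proved identity \eqref{my_general_Menon_id} applied with the trivial shift $\a=(1,\ldots,1)$, together with \eqref{form_cyclic_subgroups}. The one place that needs care is the combinatorial bookkeeping for $N_\ell$ and the boundary term $\ell=v$ in the two auxiliary sums; getting the inclusion–exclusion correction $-1$ and the hypothesis $u\ge v$ right is what makes the closed form \eqref{speci_1_values} come out cleanly.
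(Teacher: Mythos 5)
Your proposal is correct and follows essentially the same route as the paper: the author likewise obtains \eqref{final_form_cyclic_subgroups} by combining \eqref{form_cyclic_subgroups} with \eqref{my_general_Menon_id} for $a_1=\cdots=a_r=1$ and $M=m_1\cdots m_r$, gets \eqref{final_form_cyclic_subgroups_2} via Corollary \ref{cor11} (the identity $\phi(d_1)\phi(d_2)=\phi(\gcd(d_1,d_2))\phi(\lcm[d_1,d_2])$), and states that \eqref{speci_1_values} is ``easily obtained'' from \eqref{final_form_cyclic_subgroups_2}. Your explicit count $N_\ell=u+v+1-2\ell$ and the two telescoping sums correctly supply the computation the paper leaves to the reader.
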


\begin{proof} Formula \eqref{final_form_cyclic_subgroups} follows at
once from \eqref{form_cyclic_subgroups}, deduced in the
Introduction, by applying \eqref{my_general_Menon_id} with
$a_1=\ldots =a_r=1$ and $M=m_1\cdots m_r$. For the case $r=2$ see
Corollary \ref{cor11}. The values of $c(C_{p^u}\times C_{p^v})$ are
easily obtained by \eqref{final_form_cyclic_subgroups_2}.
\end{proof}

Note that certain formulae for the number of cyclic subgroups of the
$p$-group $C_{p^{\alpha_1}} \times \cdots \times C_{p^{\alpha_r}}$
($\alpha_1,\ldots,\alpha_r\in \N$), including \eqref{speci_1_values}
were deduced in the recent paper \cite[Section 4]{Tar2010} by a
different method. Formulae \eqref{final_form_cyclic_subgroups} and
\eqref{final_form_cyclic_subgroups_2} are given, without proof, in
\cite{OEIS} in cases $r=3$, $m_1=m_2=m_3$ and $r=2$, $m_1=m_2$,
respectively.

\section{Further remarks}

In case $r=1$ formulae \eqref{repr_S} and \eqref{repr_R} can be used
to deduce new identities, representing functions of a single
variable, if the values $N_{g_1}(n)$, respectively $\eta_{g_1}(n)$
($n\in \N$) are known. As examples, we point out the next
identities.

\begin{corollary} Let $j\in \N$. For every $n\in \N$,
\begin{equation}
\frac1{n} \sum_{k=1}^n \gcd(k^j,n)= \sum_{d \mid n}
\frac{\phi(d)N^{(j)}(d)}{d},
\end{equation}
where the multiplicative function $N^{(j)}$ is given by
$N^{(j)}(p^a)=p^{[(j-1)a/j]}$ for every prime power $p^a$ ($a\in
\N$), $[y]$ denoting the greatest integer $\le y$.
\end{corollary}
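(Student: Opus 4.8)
The plan is to read this identity as the one-variable case ($r=1$) of Theorem~\ref{theorem_S}. I would take $f_1 = \id$, $g_1(x) = x^j$ and $M = m_1 = n$, so that the left-hand side is precisely $S_{\id}^{(x^j)}(n)$, and \eqref{repr_S} gives
\begin{equation*}
\frac1n \sum_{k=1}^n \gcd(k^j,n) = \sum_{d\mid n} \frac{(\mu*\id)(d)}{d}\, N_{x^j}(d).
\end{equation*}
Since $\mu*\id = \phi$, this is exactly the asserted formula provided we identify $N_{x^j}$ with the function $N^{(j)}$. So everything reduces to computing $N_{x^j}(n)$, the number of solutions $x$ (mod $n$) of the congruence $x^j \equiv 0$ (mod $n$).

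By the one-variable case of Lemma~\ref{lemma_sim_cong}, the function $n\mapsto N_{x^j}(n)$ is multiplicative, so it is enough to evaluate it at a prime power $p^a$ ($a\in\N$). A residue $x$ (mod $p^a$) satisfies $p^a \mid x^j$ if and only if $j\,v_p(x) \ge a$, that is $v_p(x) \ge \lceil a/j\rceil$, where $v_p$ denotes the $p$-adic valuation; since $j\ge 1$ we have $\lceil a/j\rceil \le a$, so the solutions are exactly the multiples of $p^{\lceil a/j\rceil}$ in a complete residue system modulo $p^a$, and there are $p^{a-\lceil a/j\rceil}$ of them. Hence $N_{x^j}(p^a) = p^{a - \lceil a/j\rceil}$.

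It then remains to match exponents, i.e.\ to check $a - \lceil a/j\rceil = [(j-1)a/j]$. Using $a\in\Z$ and $\lceil t\rceil = -[-t]$ one gets $a - \lceil a/j\rceil = a + [-a/j] = [a - a/j] = [(j-1)a/j]$, so $N_{x^j}(p^a) = p^{[(j-1)a/j]} = N^{(j)}(p^a)$ and we are done. I do not expect any real obstacle: the whole argument is a direct specialization of Theorem~\ref{theorem_S} and Lemma~\ref{lemma_sim_cong}, the only delicate point being the elementary floor--ceiling manipulation in the last step. As a check, $j=1$ gives $N^{(1)}=\1$ and recovers \eqref{Pillai_A}.
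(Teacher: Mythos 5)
Your proposal is correct and follows exactly the paper's route: specialize \eqref{repr_S} to $r=1$, $f_1=\id$, $g_1(x)=x^j$ and evaluate $N_{x^j}(p^a)$, which the paper leaves as "easily checked" and you carry out in full (correctly, including the floor--ceiling identity $a-\lceil a/j\rceil=[(j-1)a/j]$). No discrepancies.
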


\begin{proof} Apply formula \eqref{repr_S} for $r=1$, $f_1=\id$, $g(x)=x^j$
where the number of solutions of the congruence $x^j\equiv 0$ (mod
$p^a$) is $p^{[(j-1)a/j]}$, as it can be checked easily.
\end{proof}

In what follows consider formula \eqref{repr_R} for $r=1$, $f_1=\id$
and $g_1=g$. Then  \eqref{repr_R} reduces to \eqref{Richards_id}.

\begin{corollary} Let $a,b\in \Z$ with $\gcd(b,n)=1$. Then for every $n\in \N$,
\begin{equation} \label{Menon_id_linear} \sum_{\substack{k=1\\
\gcd(k,n)=1}}^n \gcd(bk-a,n)= \phi(n) \tau(n,a).
\end{equation}
\end{corollary}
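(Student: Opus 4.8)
The plan is to specialize the one‑variable Richards identity \eqref{Richards_id} to the linear polynomial $g(x)=bx-a$, which has integer coefficients, and then to read off the value of the counting function $\eta_g$. As recorded just above the statement, formula \eqref{repr_R} with $r=1$, $f_1=\id$, $g_1=g$ gives
\begin{equation*}
\sum_{\substack{k=1\\ \gcd(k,n)=1}}^n \gcd(bk-a,n)= \phi(n) \sum_{d\mid n} \eta_g(d),
\end{equation*}
where $\eta_g(d)$ is the number of $x \pmod d$ with $\gcd(x,d)=1$ and $bx\equiv a \pmod d$. So the task is reduced to evaluating $\sum_{d\mid n}\eta_g(d)$.

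Next I would fix a divisor $d\mid n$ and determine $\eta_g(d)$. Since $\gcd(b,n)=1$ forces $\gcd(b,d)=1$, the class of $b$ is a unit modulo $d$, so the congruence $bx\equiv a \pmod d$ has a \emph{unique} solution, namely $x\equiv \overline{b}\,a \pmod d$, where $\overline{b}$ denotes an inverse of $b$ modulo $d$. The key elementary observation is that multiplication by a unit modulo $d$ preserves the gcd with $d$: since $\overline{b}$ is a unit mod $d$, one has $\gcd(\overline{b}\,a,d)=\gcd(a,d)$. Hence the unique solution is coprime to $d$ precisely when $\gcd(a,d)=1$, which means
\begin{equation*}
\eta_g(d)=
\begin{cases}
1, & \gcd(a,d)=1,\\
0, & \text{otherwise}.
\end{cases}
\end{equation*}

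Finally I would sum over $d$: $\sum_{d\mid n}\eta_g(d)$ counts exactly the divisors $d$ of $n$ with $\gcd(d,a)=1$, which by definition is $\tau(n,a)$. Substituting this back yields $\sum_{\gcd(k,n)=1}\gcd(bk-a,n)=\phi(n)\tau(n,a)$, as claimed; the case $n=1$ is trivially consistent. I do not anticipate a genuine obstacle here: the only point needing a line of justification is the invariance $\gcd(\overline{b}\,a,d)=\gcd(a,d)$, and this is immediate from the fact that an integer coprime to $d$ neither creates nor destroys common factors with $d$. The work is essentially bookkeeping once \eqref{Richards_id} is in hand.
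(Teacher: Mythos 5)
Your proposal is correct and follows the paper's own route exactly: specialize Richards' identity \eqref{Richards_id} to $g(x)=bx-a$ and observe that $\eta_g(d)=1$ when $\gcd(a,d)=1$ and $0$ otherwise, so the divisor sum equals $\tau(n,a)$. The paper states this in two lines; you merely supply the (correct) justification that the unique solution $\overline{b}a$ of $bx\equiv a\pmod d$ satisfies $\gcd(\overline{b}a,d)=\gcd(a,d)$.
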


\begin{proof} Apply formula \eqref{Richards_id} in case of the linear polynomial
$g(x)=bx-a$. Here $\eta_g(n)=1$ for $\gcd(a,n)=1$ and $\eta_g(n)=0$
otherwise.
\end{proof}

For $a=b=1$ \eqref{Menon_id_linear} reduces to \eqref{Menon_id}.

\begin{corollary} Let $n\in \N$. Then
\begin{equation} \label{Menon_id_quadr}
\sum_{\substack{k=1\\ \gcd(k,n)=1}}^n \gcd(k^2-1,n)= \phi(n) h(n),
\end{equation}
where
\begin{equation}
h(n)=\begin{cases} \tau(m^2), & n=m \ \text{odd}, \\
2\tau(m^2), & n=2m, m \ \text{odd},\\
4(\ell-1) \tau(m^2), & n=2^{\ell}m, \ell \ge 2, m \ \text{odd}.
\end{cases}
\end{equation}
\end{corollary}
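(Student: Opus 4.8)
The plan is to specialize Richards' identity \eqref{Richards_id} to the quadratic polynomial $g(x)=x^2-1$. With this choice the left hand side of \eqref{Menon_id_quadr} becomes $\phi(n)\sum_{d\mid n}\eta_g(d)$, so it suffices to identify $h(n)=\sum_{d\mid n}\eta_g(d)$. Recall that $\eta_g(d)$ counts the $x$ (mod $d$) with $x^2\equiv 1$ (mod $d$) \emph{and} $\gcd(x,d)=1$; but $x^2\equiv 1$ (mod $d$) already forces $\gcd(x,d)=1$, so $\eta_g(d)=N_g(d)$, the full number of solutions of $x^2\equiv 1$ (mod $d$). Thus the whole problem reduces to evaluating the divisor sum of the (multiplicative, by Lemma \ref{lemma_sim_cong} or \cite[Th.\ 5.28]{Apo1976}) function $N_g$.

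Next I would recall the classical values of $N_g$ at prime powers. For an odd prime $p$ the group $(\Z/p^a\Z)^\times$ is cyclic of even order, hence has a unique element of order $2$, so $N_g(p^a)=2$ for every $a\ge 1$. For $p=2$ one has $N_g(2)=1$, $N_g(4)=2$, and $N_g(2^a)=4$ for $a\ge 3$ (the four solutions being $\pm 1$ and $2^{a-1}\pm 1$), reflecting $(\Z/2^a\Z)^\times\cong\Z/2\Z\times\Z/2^{a-2}\Z$. Since $N_g$ is multiplicative, so is $h=N_g*\1$, and I would compute it at prime powers: for odd $p$, $h(p^a)=1+2a=\tau(p^{2a})$; and $h(2^0)=1$, $h(2)=1+1=2$, while for $\ell\ge 2$, $h(2^{\ell})=1+1+2+4(\ell-2)=4(\ell-1)$. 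Writing $n=2^{\ell}m$ with $m$ odd and using multiplicativity, $h(n)=h(2^{\ell})\prod_{p^a\parallel m}\tau(p^{2a})=h(2^{\ell})\,\tau(m^2)$, which is exactly the three displayed cases. This finishes the proof.

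The only step that is not pure bookkeeping with multiplicative functions is the count of square roots of $1$ modulo $2^a$ — the passage from two solutions modulo $4$ to four solutions modulo $2^a$ for $a\ge 3$. Once that structural fact about $(\Z/2^a\Z)^\times$ is recorded, everything else (the evaluation $h(p^a)=\tau(p^{2a})$ for odd $p$, and the assembly of the cases according to the $2$-adic valuation of $n$) is routine. I therefore expect the $2$-part computation to be the main, and essentially the only, obstacle.
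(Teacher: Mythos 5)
Your proof is correct and follows the same route as the paper: specialize Richards' identity to $g(x)=x^2-1$, note $\eta_g=N_g$ since square roots of $1$ are units, and evaluate $\sum_{d\mid n}N_g(d)$ from the standard prime-power values of $N_g$. You merely spell out the prime-power computation of $h=N_g*\1$ that the paper leaves implicit.
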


\begin{proof} Apply formula \eqref{Richards_id} for the polynomial
$g(x)=x^2-1$. Any solution of $x^2\equiv 1$ (mod $n$) is coprime to
$n$, hence $\eta_g(n)=N_g(n)$. For the number $N_g(p^a)$ of
solutions of $x^2\equiv 1$ (mod $p^a$) it is well known that
$N_g(p^a)=2$ ($p$ odd prime, $a\in \N$), $N_g(2)=1$, $N_g(4)=2$,
$N_g(2^{\ell})=4$ ($\ell \ge 3$).
\end{proof}

Finally, let $j\in \N$ be fixed. Group the prime factors of $n\in
\N$ according to the values $\gcd(p-1,j)=d$ and write
$n=\prod_{d\mid j} n_d$, where for any $d\mid j$,
\begin{equation*}
n_d=\prod_{\substack{p^k\mid \mid n \\ \gcd(p-1,j)=d}} p^k.
\end{equation*}

\begin{corollary} For every $n\in \N$ odd,
\begin{equation} \label{Menon_id_power_j}
\sum_{\substack{k=1\\ \gcd(k,n)=1}}^n \gcd(k^j-1,n)= \phi(n)
\prod_{d\mid j} \tau(n^d_d).
\end{equation}
\end{corollary}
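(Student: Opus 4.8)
The plan is to specialize Richards' identity \eqref{Richards_id} to the polynomial $g(x)=x^j-1$ and then to evaluate the resulting divisor sum using multiplicativity together with the known structure of the unit groups modulo prime powers.

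First I would apply \eqref{Richards_id} with $g(x)=x^j-1$, obtaining
\begin{equation*}
\sum_{\substack{k=1\\\gcd(k,n)=1}}^n \gcd(k^j-1,n)=\phi(n)\sum_{d\mid n}\eta_g(d).
\end{equation*}
The next point is that every solution $x$ of $x^j\equiv 1$ (mod $d$) is automatically coprime to $d$, since a common prime divisor of $x$ and $d$ would divide $1$; hence $\eta_g(d)=N_g(d)$ for every $d\mid n$. By the remark preceding Lemma \ref{lemma_sim_cong} (equivalently, Lemma \ref{lemma_sim_cong} with $r=1$) the function $N_g$ is multiplicative, so $\sum_{d\mid n}\eta_g(d)=\prod_{p^a\parallel n}\bigl(1+\eta_g(p)+\cdots+\eta_g(p^a)\bigr)$, and it suffices to identify each local factor.

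Then I would compute $\eta_g(p^a)=N_g(p^a)$ for an odd prime power $p^a\parallel n$. Since $n$ is odd, $(\Z/p^a\Z)^{\times}$ is cyclic of order $\phi(p^a)$ (Gauss), and in a cyclic group of order $m$ the equation $x^j=1$ has exactly $\gcd(j,m)$ solutions; thus $N_g(p^a)=\gcd\bigl(j,\phi(p^a)\bigr)$, which takes the constant value $\gcd(j,p-1)$ for all $a\ge 1$. Writing $d_p:=\gcd(p-1,j)$, the local factor at $p$ therefore equals $1+a\,d_p=\tau\bigl(p^{a d_p}\bigr)$.

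Finally I would reassemble the product. Grouping the prime powers $p^a\parallel n$ according to the value $d=\gcd(p-1,j)$ exactly realizes the decomposition $n=\prod_{d\mid j}n_d$ from the statement; each $p^a\parallel n_d$ contributes $\tau(p^{ad})$, and by multiplicativity of $\tau$ the contributions of the prime powers dividing $n_d$ multiply to $\tau(n_d^d)$. Multiplying over $d\mid j$ gives $\sum_{d\mid n}\eta_g(d)=\prod_{d\mid j}\tau(n_d^d)$, which is \eqref{Menon_id_power_j}. The only genuinely nontrivial ingredient is the cyclicity of $(\Z/p^a\Z)^{\times}$ for odd $p$ and the attendant count of $j$-th roots of unity; the remaining steps are the (routine) bookkeeping of regrouping prime powers, which I expect to be the main, if unglamorous, obstacle.
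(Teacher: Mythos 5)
Your route is the same as the paper's: specialize Richards' identity \eqref{Richards_id} to $g(x)=x^j-1$, observe that $\eta_g=N_g$ because any root of $x^j\equiv 1$ is a unit, use multiplicativity of $N_g$ to localize the divisor sum at prime powers, and regroup the primes according to the value of $\gcd(p-1,j)$. The one genuinely quantitative step, however, contains a gap. From the cyclicity of $(\Z/p^a\Z)^{\times}$ you correctly get $N_g(p^a)=\gcd\bigl(j,\phi(p^a)\bigr)=\gcd\bigl(j,p^{a-1}(p-1)\bigr)$, but this is \emph{not} constant in $a$ when $p\mid j$: for $p=3$, $j=3$, $a=2$ one has $\gcd(3,\phi(9))=\gcd(3,6)=3$, not $\gcd(3,2)=1$, and indeed $x^3\equiv 1 \pmod 9$ has the three solutions $1,4,7$. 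The correct local factor is $1+\sum_{b=1}^{a}\gcd\bigl(j,p^{b-1}(p-1)\bigr)$, which equals $1+a\gcd(j,p-1)=\tau\bigl(p^{a\gcd(j,p-1)}\bigr)$ only when $p\nmid j$.

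To be fair, the paper's own proof asserts $\eta_g(p^a)=\gcd(j,p-1)$ for all $a$ without justification and has exactly the same defect; in fact the identity fails as stated: for $n=9$, $j=3$ the left-hand side of \eqref{Menon_id_power_j} equals $9+1+9+1+9+1=30$, while the right-hand side is $\phi(9)\,\tau(9)=18$ (here $n_1=9$, $n_3=1$). Your argument, like the paper's, becomes correct under the additional hypothesis that no prime factor of $n$ divides $j$ (for instance $\gcd(j,n)=1$), since then $\gcd\bigl(j,p^{a-1}(p-1)\bigr)=\gcd(j,p-1)$ for every prime power $p^a$ dividing $n$ and the regrouping into the blocks $n_d$ goes through exactly as you describe. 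So the gap to close is either to add this hypothesis or to replace the local factor by the correct, non-constant count.
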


\begin{proof} For $g(x)=x^j-1$ we have $\eta_g(n)=N_g(n)$ with
$\eta(p^a)=\gcd(j,p-1)$ for every $p$ odd prime and $a\in \N$. Apply
formula \eqref{Richards_id}. Now, for $F(n):=\sum_{d\mid n}
\eta_g(d)$ one has $F(p^a)=1+a\gcd(j,p-1)=\tau(p^{a\gcd(j,p-1)})$
for any $p$ odd prime and $a\in \N$.
\end{proof}

\begin{corollary} {\rm ($j=6$)}
For every $n\in \N$ odd,
\begin{equation} \label{Menon_id_power_6}
\sum_{\substack{k=1\\ \gcd(k,n)=1}}^n \gcd(k^6-1,n)= \phi(n)
\tau(A^6)\tau(B^2),
\end{equation}
where $A$ is the product, with multiplicity, of the prime factors
$p\equiv 1$ (mod $6$) of $n$, and $B=n/A$.
\end{corollary}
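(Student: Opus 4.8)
The plan is to obtain \eqref{Menon_id_power_6} as the special case $j=6$ of the preceding corollary, i.e.\ of formula \eqref{Menon_id_power_j}, after simplifying the product $\prod_{d\mid j}\tau(n_d^d)$ on its right-hand side. So the first step is simply to write down what \eqref{Menon_id_power_j} says for $j=6$: since the divisors of $6$ are $1,2,3,6$, we have $n=n_1 n_2 n_3 n_6$, where $n_d$ is the product of those prime powers $p^k\mid \mid n$ for which $\gcd(p-1,6)=d$, and the right-hand side of \eqref{Menon_id_power_j} becomes $\phi(n)\,\tau(n_1)\tau(n_2^2)\tau(n_3^3)\tau(n_6^6)$.

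The key (and only slightly nontrivial) step is to decide which values $d\mid 6$ actually occur as $\gcd(p-1,6)$ for an odd prime $p\mid n$. If $p$ is odd then $2\mid p-1$, hence $\gcd(p-1,6)=2\gcd(p-1,3)\in\{2,6\}$; moreover $\gcd(p-1,6)=6$ exactly when $3\mid p-1$, that is, when $p\equiv 1\pmod 6$, and otherwise (this includes $p=3$ and every $p\equiv 5\pmod 6$) one has $\gcd(p-1,6)=2$. Consequently $n_1=n_3=1$, while $n_6$ is precisely the product, with multiplicity, of the prime factors $p\equiv 1\pmod 6$ of $n$, so $n_6=A$, and $n_2=n/n_6=n/A=B$.

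Substituting $n_1=n_3=1$, $n_6=A$, $n_2=B$ into the expression from the first step and using $\tau(1)=1$ gives $\prod_{d\mid 6}\tau(n_d^d)=\tau(A^6)\tau(B^2)$, which is exactly \eqref{Menon_id_power_6}. I expect no real obstacle: once \eqref{Menon_id_power_j} is available, everything reduces to the elementary observation that an odd prime $p$ satisfies $\gcd(p-1,6)=6$ iff $p\equiv 1\pmod 6$ and $\gcd(p-1,6)=2$ otherwise, so the remaining work is merely bookkeeping with the factorization $n=\prod_{d\mid 6}n_d$ and the fact that only the exponents $d=2$ and $d=6$ contribute.
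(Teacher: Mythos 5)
Your proposal is correct and follows exactly the route the paper intends: the corollary is stated as an immediate specialization of \eqref{Menon_id_power_j} to $j=6$, and your observation that an odd prime $p$ has $\gcd(p-1,6)=6$ iff $p\equiv 1\pmod 6$ and $\gcd(p-1,6)=2$ otherwise (so $n_1=n_3=1$, $n_6=A$, $n_2=B$) is precisely the bookkeeping needed. The paper leaves this verification implicit, so your write-up simply makes it explicit.
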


\end{document}